\numberwithin{equation}{section}
\newcommand\urladdrx[1]{{\urladdr{\def~{{\tiny$\sim$}}#1}}}
\title{Graph limits and hereditary properties}
\date{February 17, 2011; revised and extended March 28, 2013}
\author{Svante Janson}
\address{Department of Mathematics, Uppsala University, PO Box 480,
SE-751~06 Uppsala, Sweden}
\email{svante.janson@math.uu.se}
\thanks{Research partly supported by the Knut and Alice Wallenberg Foundation.}
\thanks{A preliminary version was presented at the workshop
\emph{Graph limits, homomorphisms and structures II} at
Hrani{\v c}n{\'i} Z{\'a}me{\v c}ek,  Czech Republic, 2012.
I thank the participants, and Anders Johansson, for helpful comments.
}
\subjclass[2010]{05C99} 
\newtheorem{theorem}{Theorem}[section]
\newtheorem{lemma}[theorem]{Lemma}
\newtheorem{corollary}[theorem]{Corollary}
\theoremstyle{definition}
\newtheorem{example}[theorem]{Example}
\newtheorem{problem}[theorem]{Problem}
\newtheorem{remark}[theorem]{Remark}
\newtheorem*{remark*}{Remark}
\theoremstyle{remark}
\newenvironment{romenumerate}{\begin{enumerate}
 }{\end{enumerate}}
\newcounter{oldenumi}
{\setcounter{oldenumi}{\value{enumi}}
\begin{romenumerate} \setcounter{enumi}{\value{oldenumi}}}
{\end{romenumerate}}
\newcounter{prop}
\newenvironment{property}%
{\setlength{\leftmargini}{30pt}%
\begin{enumerate}%
\setlength{\labelsep}{\leftmargini}
\stepcounter{prop}%
\item\em}
{\end{enumerate}}
\newcounter{thmenumerate}
\newenvironment{thmenumerate}
{\setcounter{thmenumerate}{0}%
 \def\item{\par
 \refstepcounter{thmenumerate}\textup{(\roman{thmenumerate})\enspace}}
}
{}
\newcounter{xenumerate}   
\newcommand\pfitemref[1]{\par\ref{#1}:}
\newcommand{\refT}[1]{Theorem~\ref{#1}}
\newcommand{\refC}[1]{Corollary~\ref{#1}}
\newcommand{\refL}[1]{Lemma~\ref{#1}}
\newcommand{\refR}[1]{Remark~\ref{#1}}
\newcommand{\refS}[1]{Section~\ref{#1}}
\newcommand{\refE}[1]{Example~\ref{#1}}
\xdef\klockan{\the\count1.0\the\count255}
\xdef\klockan{\the\count1.\the\count255}\fi
\newcommand\nopf{\qed}   
\newcommand{\sumi}{\sum_{i=1}^\infty}
\newcommand\set[1]{\ensuremath{\{#1\}}}
\newcommand\bigpar[1]{\bigl(#1\bigr)}
\newcommand\lrpar[1]{\left(#1\right)}
\def\rompar(#1){\textup(#1\textup)}    
\def\xexp(#1){e^{#1}}
\newcommand\ceil[1]{\lceil#1\rceil}
\newcommand\floor[1]{\lfloor#1\rfloor}
\newcommand\setn{\set{1,\dots,n}}
\newcommand\ntoo{\ensuremath{{n\to\infty}}}
\newcommand\asntoo{\text{as }\ntoo}
\newcommand\iid{i.i.d.\spacefactor=1000}    
\newcommand\ie{i.e.\spacefactor=1000}
\newcommand\eg{e.g.\spacefactor=1000}
\newcommand\viz{viz.\spacefactor=1000}
\newcommand\cf{cf.\spacefactor=1000}
\newcommand{\as}{a.s.\spacefactor=1000}
\newcommand{\aex}{a.e.\spacefactor=1000}
\newcommand{\tend}{\longrightarrow}
\newcommand\dto{\overset{\mathrm{d}}{\tend}}
\newcommand\pto{\overset{\mathrm{p}}{\tend}}
\newcommand\bbR{\mathbb R}
\newcommand\bbN{\mathbb N}
\newcounter{CC} 
\newcounter{cc}
\newcommand\E{\operatorname{\mathbb E{}}}
\renewcommand\P{\operatorname{\mathbb P{}}}
\newcommand\ga{\alpha}
\newcommand\gd{\delta}
\newcommand\gD{\Delta}
\newcommand\gf{\varphi}
\newcommand\gam{\gamma}
\newcommand\gG{\Gamma}
\newcommand\gl{\lambda}
\newcommand\gs{\sigma}
\newcommand\cA{\mathcal A}
\newcommand\cC{\mathcal C}
\newcommand\cD{\mathcal D}
\newcommand\cF{\mathcal F}
\newcommand\cI{\mathcal I}
\newcommand\cJ{\mathcal J}
\newcommand\cP{\mathcal P}
\newcommand\cQ{\mathcal Q}
\newcommand\cS{{\mathcal S}}
\newcommand\cT{{\mathcal T}}
\newcommand\cU{{\mathcal U}}
\newcommand\ett[1]{\boldsymbol1\set{#1}} 
\newcommand\etta{\boldsymbol1} 
\def\[#1]{[\![#1]\!]}
\newcommand\qqq{^{1/3}}
\newcommand\qqqw{^{-1/3}}
\newcommand\qqqb{^{2/3}}
\renewcommand{\=}{:=}
\newcommand\oi{[0,1]}
\newcommand\setoi{\set{0,1}}
\newcommand\dtv{d_{\mathrm{TV}}}
\newcommand\dd{\,\textup{d}}
\newcommand{\Lovasz}{Lov\'asz}
\newcommand{\tind}{t_{\mathrm{ind}}}
\newcommand{\cuq}{\overline{\cU}}
\newcommand{\cuqf}{\overline{\cU_\cF}}
\newcommand{\bcu}{\overline{\cU}}
\newcommand{\bbcu}{\widehat{\cU}}
\newcommand{\xoo}[1]{\widehat{#1}}
\newcommand{\cuoo}{\xoo{\cU}}
\newcommand{\cpoo}{\xoo{\cP}}
\newcommand{\cqoo}{\xoo{\cQ}}
\newcommand{\bcp}{\overline{\cP}}
\newcommand{\bct}{\overline{\cT}}
\newcommand{\dcut}{\gd_\square}
\newcommand{\ps}{probability space}
\newcommand{\sss}{\cS}
\newcommand{\sssq}{{\cS^2}}
\newcommand{\aig}{$\cA$-inter\-sec\-tion graph}
\newcommand{\aigs}{$\cA$-inter\-sec\-tion graphs}
\newcommand{\aigl}{$\cA$-intersection graph limit}
\newcommand\oivalued{\setoi-valued} 
\newcommand{\UI}{\mathcal{UI}}
\newcommand{\CR}{\mathcal{CR}}
\newcommand{\cfx}{{\cF^*}}
\newcommand{\cUU}{\cU^*}
\newcommand{\pa}{\cP_{\cA}}
\newcommand{\bpa}{\overline{\cP_{\cA}}}
\newcommand{\dc}{disjoint clique}
\newcommand{\dcg}{disjoint clique graph}
\newcommand{\dcgl}{disjoint clique graph limit}
\newcommand{\DC}{\mathcal{DC}}
\newcommand{\DCoo}{\xoo{\DC}}
\newcommand{\dci}{\mathcal{DC}_1}
\newcommand{\iii}{_{i=1}^\infty}
\newcommand{\bcn}{\textbf{c}_n}
\newcommand{\bs}{\textbf{s}}
\newcommand{\MP}{\mathcal{M}}
\newcommand{\LG}{\mathcal{LG}}
\newcommand{\nn}{^{(n)}}
\newcommand{\lgl}{line graph limit}
\newcommand{\cflg}{\cF_L}
\newcommand\xxm{x_1,\dots,x_m}
\newcommand{\SP}{\mathcal{SP}}
\newcommand{\hSP}{\widehat{\SP}}
\newcommand{\ggw}{[W]}
\newcommand{\ggx}[1]{[#1]}
\newcommand{\ggo}{\ggx0}
\newcommand{\ggi}{\ggx1}
\newcommand{\ggd}{\ggMP}
\newcommand{\CH}{\mathcal{CH}}
\newcommand{\wch}{W^{\CH}}
\newcommand{\wMP}{W^{\MP}}
\newcommand{\ws}{\wMP_{\bs}}
\newcommand{\ggMP}{\gG^{\MP}}
\newcommand{\ggs}{\ggMP_{\bs}}
\newcommand\REM[1]{{\raggedright\texttt{[#1]}\par\marginal{XXX}}}
\begin{document}

\begin{abstract} 
We collect some general results on graph limits associated to hereditary
classes of graphs.
As examples, we consider some classes 
defined by forbidden subgraphs and some classes
of intersection graphs, including 
triangle-free graphs,
chordal graphs,
cographs,
interval
graphs, unit interval graphs, threshold graphs, 
and line graphs.
\end{abstract}

\maketitle

\section{Introduction}\label{S:intro}

We use standard concepts from the theory of graph limits, see
\eg{} \cite{LSz,BCLSVi,BCLSVii,SJ209} and the recent book by \citet{Lovasz}.
(We use here mainly the notation of \cite{SJ209}.)
$\cU$ is the set of all unlabelled graphs
(all our graphs are finite and simple), and this is embedded (as a
countable, discrete, dense, open) subset of a compact metric space $\bcu$;
the complement $\cuoo\=\bcu\setminus\cU$ is the set of graph limits, 
which thus itself is a compact metric space.
If $F$ and $G$ are graphs, then the \emph{homomorphism} or \emph{subgraph
  number} $t(F,G)$  
is the probability that a uniformly random mapping $\gf$ of the vertex set
$V(F)$ into 
$V(G)$ is a graph homomorphism, \ie, $u \sim v \implies \gf(u)\sim\gf(v)$
for $u,v\in V(F)$;
similarly, the \emph{induced subgraph number} $\tind(F,G)$ is 
(when $|F|\le |G|$) 
the probability that
a uniformly random injective mapping $V(F)\to V(G)$ is a graph isomorphism
onto an induced subgraph, \ie, $u \sim v \iff \gf(u)\sim\gf(v)$,
The subgraph numbers $t(F,G)$ 
and the induced subgraph numbers $\tind(F,G)$
extend from graphs $G\in\cU$ to general $G\in\cuq$ by continuity
(see the
references above for details, and note that $F$ always is a graph, which
we regard as fixed).
Moreover, the topology of $\cuq$ can be described by 
these numbers $t(F,G)$ 
or $\tind(F,G)$, and
a sequence of graphs $(G_n)$ converges to a graph limit $\gG$
$\iff$
$|G_n|\to\infty$ and  
$t(F,G_n)\to t(F,\gG)$ for every graph $F$
$\iff$
$|G_n|\to\infty$ and  
$\tind(F,G_n)\to \tind(F,\gG)$ for every graph $F$.
Furthermore, a graph limit $\gG$ is uniquely determined by the numbers
$t(F,\gG)$
(or $\tind(F,\gG)$) for  $F\in\cU$.

A \emph{graph class} is a subset of the set $\cU$ of unlabelled graphs,
\ie, a class of graphs closed under isomorphisms.
Similarly, a \emph{graph property} is a property of graphs that does not
distinguish between isomorphic graphs; there is an obvious 1--1
correspondence between graph classes and graph properties and we will not
distinguish between a graph property and the corresponding class. 
A graph class or property $\cP$ is \emph{hereditary} if whenever a graph $G$ has
the property $\cP$, then every induced subgraph of $G$ also has $\cP$;
this can be written
\begin{equation}\label{her}
 G \in\cP \text{ and } \tind(F,G)>0 \implies F\in\cP.
\end{equation}

Many examples of hereditary graph classes are given in \eg{}
\cite{Brand} and \cite{Golumbic}.

\begin{example}[intersection graphs]\label{Einter}
Many interesting hereditary graph classes are given by 
various classes of intersection graphs.
In general, 
we consider a collection $\cA$ of subsets of some
universe and say that a graph $G$ 
is an \aig{} if 
there exists a collection of sets $\set{A_i}_{i\in V(G)}\in\cA$ 
such that
there is an edge $ij\in E(G)$ if and only if $A_i\cap A_j\neq0$.
The class $\pa$ of all \aig{s} is a hereditary graph class, for any $\cA$.

Specific
examples are the classes of interval graphs, 
unit interval graphs,
circular-arc graphs, circle graphs and permutation graphs 
studied in
\cite{SJ254}. See \eg{} \cite{Brand} and
\cite{Golumbic} for several further examples.
\end{example}

\begin{example}[forbidden subgraphs]\label{Eforbidden}
  If $\cF$ is a (finite or infinite) family of (unlabelled) graphs,
 let $\cU_\cF$ be the class of all graphs that 
do not contain any graph from $\cF$ as an induced subgraph:
Then  $\cU_\cF$ is a hereditary graph class. We will study this type of
graph classes in \refS{Sforbidden}, where also several examples are given.
\end{example}

\begin{example}[monotone properties]\label{Emon}
  A property $\cP$ is \emph{monotone} if whenever a graph $G$ has the
  property $\cP$,
  then so does every (not necessarily induced) subgraph of $G$. In other
  words, \cf{} \eqref{her}, 
\begin{equation}\label{mon}
 G \in\cP \text{ and } t(F,G)>0 \implies F\in\cP.
\end{equation}
Obviously, every monotone property is hereditary. 
\end{example}

Let $\cP\subseteq\cU$ be a graph class. We let $\bcp\subseteq \bcu$ be the
closure of $\cP$ in $\bcu$, and $\cpoo\=\bcp\cap\cuoo$ the set of graph
limits of graphs in $\cP$. Explicitly, $\cpoo$ is the set of graph limits $\gG$
such that there exists a sequence of graphs $G_n$ in $\cP$ with $G_n\to\gG$.
(Note that we also use $\overline{G}$ to denote the complement of a graph
$G$; this should not cause any confusion.)

\begin{remark}
Since $\cU$ is open and discrete in $\cuq$, \ie{} every element of $\cU$ is
isolated in $\bcu$, we trivially have
$\bcp\cap\cU=\cP$; thus $\bcp=\cP\cup\cpoo$. 
If $\gG$ is a graph limit, 
then $\gG\in\cpoo$ and $\gG\in\bcp$ are equivalent, and we will use both
formulations interchangeably.
\end{remark}

It seems to be of interest to study the classes $\cpoo$ of graph limits
defined by various graph properties.
General results are given in \citet{LSz:regularity} and \citet{HJSz}.
Some examples have been studied, on a case-by-case basis, 
in \cite{SJ238}
(threshold graphs) and \cite{SJ254} (interval graphs and some related
graph classes), and there are many other classes that could be studied;
apart from the intrinsic interest of various graph classes,
some further general patterns might emerge from the study of individual classes.

The purpose of this note is to collect a few general remarks, results and
examples; 
some of them from the literature and some of them new.
Some further notions and facts from graph limit theory are recalled in
\refS{Sgraphons}. 
In \refS{Srandom} we characterize
graph limits of hereditary classes of graphs 
using random graphs.
\refS{Sforbidden} studies the case of classes defined by forbidding certain
subgraphs, and 
\refS{Sinter} studies classes of intersection graphs.
\refS{Srf} treats the notion of random-free graph classes,
introduced by \citet{LSz:regularity}.
Sections \ref{Sdc}--\ref{Sclaw} consider some further (rather simple) examples,
including (\refS{SRamsey})  graph limit versions of Ramsey's theorem.

We note a simple but useful fact about a trivial case.

\begin{theorem}
  \label{T0}
Let $\cP$ be an arbitrary graph class. Then the following are equivalent.
\begin{romenumerate}
\item 
$\cP$ is finite.
\item 
There exists $n_0$ such that $|G|\ge n_0\implies G\notin\cP$.
\item 
$\cpoo=\emptyset$.
\end{romenumerate}
\end{theorem}

\begin{proof}
  (i)$\iff$(ii) is obvious, since the set \set{G\in\cU:|G|=n} is finite for
  every $n$.

(i)$\implies$(iii):
If $\cP$ is finite, then $\cP$ is a closed set in $\bcu$,
so $\bcp=\cP\subset\cU$ and thus
$\cpoo=\bcp\setminus\cU=\emptyset$.

(iii)$\implies$(ii):
If (ii) does not hold, then there is a sequence $G_n\in\cP$ with
$|G_n|\to\infty$. Then some subsequence converges, and its limit is an
element of $\cpoo$, so $\cpoo\neq\emptyset$.
\end{proof}

\begin{remark}
We have so far discussed the set $\cpoo$
of all possible limits of sequences of
graphs in a class $\cP$. Another interesting problem is to take
a uniformly random  graph $G_n$ in $\cP_n\=\set{G\in\cP:|G|=n}$ and study
its asymptotic behaviour. 
Is there a random graph limit $\gG$ such that $G_n\dto \gG$, where $\dto$
denotes convergence in distribution (as random elements of the metric space
$\bcp$)?
In particular, does there exist a single graph limit
$\gG$ (necessarily in $\cpoo$) such that $G_n\to\gG$ in probability?
There are actually two versions of this problem, since one may take $G_n$
either labelled or unlabelled; the classes of graphs is the same but the
distribution of a uniform unlabelled graph (with $n$ vertices) in $\cP$
differs in general from the
distribution of a uniform labelled graph (with vertex set $[n]$) in $\cP$,
and it is possible that the limits might differ. (An example is given in
\refE{EDC1}. Nevertheless we usually expect the same limit, since most
graphs have a trivial automorphism group.) 
We give some remarks on this problem in a few examples.
The problem of limits of random graphs is studied further in \cite{HJSz},
where it is connected to the entropy of graph limits.  
\end{remark}

\section{Graphons and random graphs}\label{Sgraphons}

A graph limit $\gG$ can be represented by a \emph{graphon}, which is a
symmetric measurable function $W:\sssq\to\oi$ for some probability space
$(\sss,\mu)$. 
(Often, but not always, taken as $(\oi,\gl)$, where $\gl$ is Lebesgue measure.)
Note that the representation is far from unique, see \eg{}
\cite{BCL:unique} and \cite{SJ249}. 
One defines, for a graph $F$ and a graphon $W$,
\begin{align}
  t(F,W)
&\=
\int_{\cS^{|F|}} \prod_{ij\in E(F)} W(x_i,x_j) 
 \dd\mu(x_1)\dotsm \dd\mu(x_{|F|}),
\label{tfw}
\\
\tind(F,W)
&\=
\int_{\cS^{|F|}} \prod_{ij\in E(F)} W(x_i,x_j) \prod_{ij\notin E(F)} (1-W(x_i,x_j))
 \dd\mu(x_1)\dotsm \dd\mu(x_{|F|}),
\label{tindfw}
\end{align}
and 
the graphon $W$ represents the graph limit $\gG$ that has $t(F,\gG)=t(F,W)$
for every graph $F$ (or, equivalently, $\tind(F,\gG)=\tind(F,W)$ for every $F$).

Unlike graph limits, graphons are not uniquely determined by the
homomorphism
numbers $t(F,W)$; we say that two graphons $W$ and $W'$ (possibly defined on
different \ps{s}) are \emph{equivalent} if they represent the same graph
limit, \ie, if $t(F,W)=t(F,W')$ for all graphs $F$.
(For other characterizations of  equivalent graphons, see \eg{}
\citet{BCL:unique}, \citet{BR}, \citet{SJ249}.)
If $F$ has connected components $F_1,\dots,F_m$, then
\begin{equation}
  \label{fcomp}
t(F,W)=\prod_{i=1}^m t(F_i,W);
\end{equation}
hence it suffices here (and for many other purposes)
to consider connected $F$.

Graph limits may thus be regarded as equivalence classes of graphons. We
write (following \cite{Pikhurko})
$\ggw$ for the graph limit represented by a graphon $W$. It is often
convenient to represent graph limits by graphons, and we may, for example, 
write  $G_n\to W$ for $G_n\to\ggw$.

Let $X_1,X_2,\dots$ be an \iid{} sequence of random elements of $\sss$ with
distribution $\mu$. Then \eqref{tfw}--\eqref{tindfw} can be written more
concisely as 
\begin{align}\label{tfwp}
  t(F,W)
&=
\E \prod_{ij\in E(F)} W(X_i,X_j) ,
\\
\tind(F,W)
&=
\E\lrpar{ \prod_{ij\in E(F)} W(X_i,X_j) \prod_{ij\notin E(F)} (1-W(X_i,X_j))}.
\label{tindfwp}
\end{align}

A graphon defines a random graph $G(n,W)$ 
with vertex set $[n]\=\setn$
for every $n\ge1$ by a standard construction:
let $X_1,X_2,\dots$ be as above, and given $X_1,\dots,X_n$, let $ij$ be
an edge with probability $W(X_i,X_j)$, independently for all pairs $(i,j)$
with $1\le i< j\le n$. It follows by \eqref{tindfwp} that
if $F$ is any graph with vertex set $[n]$, then 
\begin{align}
  \P\bigpar{G(n,W)=F}&=\tind(F,W); \label{pgnwind}
\intertext{equivalently, see \eqref{tfwp},}
  \P\bigpar{G(n,W)\supseteq F}&=t(F,W). \label{pgnw}
\end{align}
This shows that the random graph $G(n,W)$ is the same 
(in the sense that the distribution is the same) 
for all graphons
representing the same graph limit $\gG$.
Thus every graph limit $\gG$ defines a random graph $G(n,\gG)$ with vertex
set $[n]$ for every $n\ge1$, and this random graph can 
by \eqref{pgnwind}
be defined directly
by the formula
\begin{equation} \label{sofie}
  \P\bigpar{G(n,\gG)=F}=\tind(F,\gG)
\end{equation}
for every graph $F$ on $[n]$, which gives the distribution.

It is shown by \citet[Theorem  4.5]{BCLSVi} that
as \ntoo, the random graph $G(n,W)$ converges a.s.\ to $W$. Thus,
\begin{equation}
  \label{gnwlim}
G(n,\gG)\pto\gG,
\qquad \asntoo.
\end{equation}

\section{Graph limits and random graphs}\label{Srandom}

Graph limits in $\bcp$ (or equivalently, in $\cpoo\=\bcp\cap\cuoo$)
can be characterized by the random graphs $G(n,\gG)$.

\begin{theorem}\label{Tgngg}
  Let $\cP$ be a hereditary graph class and let $\gG$ be a graph limit.
Then $\gG\in\bcp$ if and only if
$G(n,\gG)\in\cP$ a.s.\ for every $n\ge1$.
\end{theorem}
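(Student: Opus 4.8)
The plan is to prove both directions using the two basic facts recalled in the excerpt: that $G(n,\gG)\to\gG$ a.s.\ as \ntoo{} \cite[Theorem 4.5]{BCLSVi}, and that the distribution of $G(n,\gG)$ is given by \eqref{sofie}, namely $\P(G(n,\gG)=F)=\tind(F,\gG)$. The heredity of $\cP$ will enter only through the implication \eqref{her}.

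For the ``if'' direction, suppose $G(n,\gG)\in\cP$ a.s.\ for every $n\ge1$. For each $n$, condition on the (a.s.\ finite) possibilities: since $G(n,\gG)\in\cP$ a.s., there is at least one graph $F_n$ on $[n]$ with $F_n\in\cP$ and $\P(G(n,\gG)=F_n)>0$, i.e.\ $\tind(F_n,\gG)>0$. Passing to unlabelled graphs, we obtain a sequence of graphs $H_n\in\cP$ with $|H_n|=n\to\infty$. It remains to show $H_n\to\gG$; this need not hold for the particular choices above, so instead I would argue that \emph{every} subsequential limit of $(H_n)$ that we can extract is $\gG$. The cleanest route: take a convergent subsequence $H_{n_k}\to\gG'$ for some graph limit $\gG'$ (possible by compactness of $\bcu$ and $|H_n|\to\infty$); then $\gG'\in\bcp$, and we must still connect $\gG'$ to $\gG$. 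To avoid this, I would instead choose the $H_n$ more carefully: for each $n$, among all $F$ on $[n]$ with $\tind(F,\gG)>0$ (all of which lie in $\cP$ by heredity once we know $\gG\in\bcp$ — but that is circular). So the honest argument is: the random graph $G(n,\gG)$ itself converges a.s.\ to $\gG$, and a.s.\ each $G(n,\gG)\in\cP$, hence a.s.\ $G(n,\gG)$ is a sequence of graphs in $\cP$ converging to $\gG$, which by definition of $\cpoo$ gives $\gG\in\cpoo\subseteq\bcp$. This is the whole point of using the random graph.

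For the ``only if'' direction, suppose $\gG\in\bcp$, so there are graphs $G_m\in\cP$ with $G_m\to\gG$. Fix $n\ge1$ and a graph $F$ on $[n]$ with $\tind(F,\gG)>0$; I must show $F\in\cP$. Since $\tind(F,\cdot)$ is continuous on $\bcu$ and $G_m\to\gG$, we have $\tind(F,G_m)\to\tind(F,\gG)>0$, so $\tind(F,G_m)>0$ for all large $m$; fix such an $m$ with $|G_m|\ge n$. Then $F$ is an induced subgraph of $G_m\in\cP$, so heredity \eqref{her} gives $F\in\cP$. Thus every $F$ on $[n]$ with $\P(G(n,\gG)=F)=\tind(F,\gG)>0$ lies in $\cP$; since $G(n,\gG)$ a.s.\ takes one of these finitely many values, $G(n,\gG)\in\cP$ a.s., as required.

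The main obstacle is purely one of bookkeeping in the ``if'' direction: one must resist the temptation to extract some convergent subsequence of handpicked graphs and instead use that the \emph{random} sequence $(G(n,\gG))_{n\ge1}$ is, on a single probability-one event, simultaneously a sequence in $\cP$ and a sequence converging to $\gG$; this uses that the a.s.\ convergence $G(n,\gG)\to\gG$ and the countably many a.s.\ membership statements $\{G(n,\gG)\in\cP\}$ can be intersected into one probability-one event. Everything else is immediate from continuity of $\tind(F,\cdot)$ and the definition \eqref{her} of heredity.
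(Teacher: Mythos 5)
Your proof is correct, and one half of it diverges from the paper's argument in an interesting way. The ``only if'' direction (from $\gG\in\bcp$ to $G(n,\gG)\in\cP$ a.s.) is essentially the paper's: both arguments combine heredity \eqref{her} with continuity of $\tind(F,\cdot)$ and the formula \eqref{sofie}, you in contrapositive form ($\tind(F,\gG)>0\Rightarrow F\in\cP$), the paper directly ($F\notin\cP\Rightarrow\tind(F,\gG)=0$). For the ``if'' direction the paper instead proves the contrapositive: if $\gG\notin\bcp$, pick an open set $V\ni\gG$ disjoint from $\cP$ and use convergence in probability of $G(n,\gG)$ to $\gG$ to get $\P\bigpar{G(n,\gG)\in\cP}\to0$; this topological-separation argument is what yields the stronger dichotomy recorded in \refT{Tgngg2}. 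You argue the implication directly: on the intersection of the countably many probability-one events $\{G(n,\gG)\in\cP\}$ with the probability-one event that the coupled sequence $G(n,\gG)\to\gG$ (the cited a.s.\ convergence, which indeed refers to the standard construction of all $G(n,\gG)$ from one sequence $X_1,X_2,\dots$), the realization is itself a sequence of graphs in $\cP$ converging to $\gG$, so $\gG\in\cpoo\subseteq\bcp$ by definition. Your route avoids any appeal to the closedness of $\bcp$ or a separating neighbourhood, at the price of using the simultaneous coupling and the countable intersection you correctly flag as the crux; it proves \refT{Tgngg} but not the quantitative statement $\P\bigpar{G(n,\gG)\in\cP}\to0$ when $\gG\notin\bcp$, which is the extra content the paper's neighbourhood argument buys.
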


This is an immediate consequence of the following more detailed result.

\begin{theorem}\label{Tgngg2}
  Let $\cP$ be a hereditary graph class and let $\gG$ be a graph limit.
Then one of the following alternatives hold:
\begin{romenumerate}
\item 
 $\gG\in\bcp$ and 
$G(n,\gG)\in\cP$ a.s.\ for every $n\ge1$.
\item 
 $\gG\notin\bcp$ and 
$\P\bigpar{G(n,\gG)\in\cP}\to0$ as \ntoo.
\end{romenumerate}
\end{theorem}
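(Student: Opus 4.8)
The plan is to prove \refT{Tgngg2}, from which \refT{Tgngg} follows immediately (note that the two alternatives are mutually exclusive since a convergent probability cannot both tend to $0$ and equal $1$ for all $n$, unless $\cP=\emptyset$, which is a trivial case). The key observation is that, by \eqref{sofie}, the event $\set{G(n,\gG)\in\cP}$ has probability $\sum_{F}\tind(F,\gG)$, where the sum ranges over all graphs $F$ on $[n]$ with $F\in\cP$. So everything reduces to understanding the quantity $p_n(\gG)\=\P\bigpar{G(n,\gG)\in\cP}$ as a function of $n$.

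The main structural step is to show that $p_n(\gG)$ is either identically $1$ or tends to $0$, with no intermediate behaviour, and that the first case is equivalent to $\gG\in\bcp$. First I would record \emph{monotonicity}: since $\cP$ is hereditary and $G(n-1,\gG)$ has the same distribution as the induced subgraph of $G(n,\gG)$ on $[n-1]$, the event $\set{G(n,\gG)\in\cP}$ implies $\set{G(n-1,\gG)\in\cP}$, so $p_n(\gG)$ is non-increasing in $n$. Next, the key dichotomy: I claim that if $p_n(\gG)=1$ for \emph{every} $n$, then $\gG\in\bcp$, and otherwise $p_n(\gG)\to0$. For the first half, if $p_n(\gG)=1$ for all $n$ then $G(n,\gG)\in\cP$ a.s.; since $G(n,\gG)\to\gG$ a.s.\ as \ntoo{} \cite[Theorem 4.5]{BCLSVi}, and $\cP\subseteq\bcp$ with $\bcp$ closed, we get $\gG\in\bcp$. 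For the converse direction and the ``otherwise'' clause, suppose $p_{n_0}(\gG)<1$ for some $n_0$. The heart of the matter is a multiplicativity/independence argument: partition $[N]$ into $\floor{N/n_0}$ disjoint blocks of size $n_0$. The restrictions of $G(N,\gG)$ to these blocks are \emph{not} independent (they share the latent variables $X_i$ only within their own block, so actually they \emph{are} independent given nothing — each block uses disjoint $X_i$'s and disjoint coin flips). Hence $\set{G(N,\gG)\in\cP}$ is contained in the intersection over blocks of the events that the block-restriction lies in $\cP$ (using heredity again), and these events are independent with common probability $p_{n_0}(\gG)<1$. Therefore $p_N(\gG)\le p_{n_0}(\gG)^{\floor{N/n_0}}\to0$ as \Ntoo.

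Finally I would tie this back to $\bcp$: if $\gG\in\bcp$, then there is a sequence $G_m\in\cP$ with $G_m\to\gG$; I want to conclude $p_n(\gG)=1$ for all $n$. This is where the induced-subgraph count enters again. For a graph $G_m$ on $N_m$ vertices, a uniformly random injection $[n]\hookrightarrow[N_m]$ induces a subgraph whose law puts mass on each $F$ equal to (essentially) $\tind(F,G_m)$ up to the $O(1/N_m)$ sampling-without-replacement correction, and since $G_m$ is hereditary-closed every such induced subgraph lies in $\cP$; thus $\sum_{F\in\cP,\,|F|=n}\tind(F,G_m)\to1$, hm-- more precisely this sum is already essentially $1$ for $N_m$ large. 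Passing to the limit $m\to\infty$ using $\tind(F,G_m)\to\tind(F,\gG)$ for each fixed $F$ (finitely many $F$ on $[n]$), together with \eqref{sofie}, gives $p_n(\gG)=\sum_{F\in\cP,\,|F|=n}\tind(F,\gG)=1$. Combining: $\gG\in\bcp\iff p_n(\gG)=1\ \forall n\iff$ alternative (i) holds; and the negation forces alternative (ii) by the multiplicativity bound. The main obstacle I anticipate is handling the sampling-without-replacement discrepancy cleanly in the last step — one must be careful that $\tind(F,G_m)$ for a finite graph $G_m$ is defined via ordered injective maps, so the identity ``induced subgraph of $G_m$ on a random $n$-subset has law $\tind(\cdot,G_m)$'' holds exactly, not just approximately, which actually makes this step cleaner than feared; the genuinely delicate point is just making sure the heredity implication in the block argument is applied to the correct (induced, vertex-disjoint) subgraphs.
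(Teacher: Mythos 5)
Your argument is correct, and for the harder half it takes a genuinely different route from the paper. For alternative (i) you and the paper do essentially the same thing: heredity forces $\tind(F,G_m)=0$ for $F\notin\cP$ along a sequence $G_m\in\cP$ with $G_m\to\gG$, and continuity of $\tind(F,\cdot)$ plus \eqref{sofie} gives $\P(G(n,\gG)\notin\cP)=0$ (you phrase it as the total $\tind$-mass of the finitely many labelled $F\in\cP$ on $[n]$ being $1$, which with the injective-map convention is exact once $|G_m|\ge n$, as you note). For alternative (ii) the paper argues topologically: $\bcp$ is closed, so $\gG\notin\bcp$ has a neighbourhood disjoint from $\cP$, and since $G(n,\gG)\to\gG$ a.s.\ (hence in probability) by \cite[Theorem 4.5]{BCLSVi}, $\P(G(n,\gG)\in\cP)\to0$. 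You instead prove a dichotomy for $p_n\=\P(G(n,\gG)\in\cP)$: the restrictions of $G(N,\gG)$ to disjoint blocks of size $n_0$ use disjoint latent variables $X_i$ and disjoint coin flips, hence are i.i.d.\ copies of $G(n_0,\gG)$, and heredity gives $p_N\le p_{n_0}^{\floor{N/n_0}}$, so either $p_n\equiv1$ or $p_n\to0$ exponentially; the case $p_n\equiv1$ is then tied to $\gG\in\bcp$ via the same a.s.\ convergence theorem, and the reverse implication from part (i) closes the equivalence, which is logically complete. Your block-independence argument buys a quantitative strengthening (exponential decay of $\P(G(n,\gG)\in\cP)$ when $\gG\notin\bcp$) at the cost of being longer; the paper's neighbourhood argument is shorter and uses nothing about the sampling model beyond convergence in probability, though both proofs ultimately lean on \cite[Theorem 4.5]{BCLSVi}. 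The only blemishes are cosmetic: the monotonicity remark is not needed, and the aside that exclusivity of the two alternatives could fail ``unless $\cP=\emptyset$'' is off the mark (if $\cP=\emptyset$ then $p_n\equiv0$, and in any case the alternatives are distinguished by $\gG\in\bcp$ versus $\gG\notin\bcp$); neither affects correctness.
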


\begin{proof}
Let $\gG\in\bcp$ and 
  suppose that $G_n\to\gG$ with $G_n\in\cP$. If $F\notin\cP$, then
  $\tind(F,G_n)=0$ for every $n$ by \eqref{her}, and thus, by \eqref{sofie},
$$\P(G(n,\gG)=F)=\tind(F,\gG)=\lim_\ntoo \tind(F,G_n)=0.$$ 

Conversely, if $\gG\notin\bcp$, then there is an open neighbourhood $V$ of
$\gG$ in $\bcu$ such that $V\cap\cP=\emptyset$. As said above,
$G(n,\gG)\to\gG$ a.s., which implies convergence in probability.
Thus $\P\bigpar{G(n,\gG)\in V}\to1$ and 
$\P\bigpar{G(n,\gG)\in\cP}
\le\P\bigpar{G(n,\gG)\notin V}
\to0$ as \ntoo.
\end{proof}

We obtain a couple of easy corollaries of \refT{Tgngg}.

\begin{theorem}\label{Tsofie}
  Let $\cP$ be a hereditary graph class and let $\gG$ be a graph limit.
Then the following are equivalent:
\begin{romenumerate}
\item \label{tsofie1}
$\gG\in\bcp$. 
\item \label{tsofie>0}
$\tind(F,\gG)>0\implies F\in\cP$.
\item \label{tsofie=0}
If $F\notin\cP$, then $\tind(F,\gG)=0$.
\end{romenumerate}
\end{theorem}

\begin{proof}
  Immediate by \refT{Tgngg} and \eqref{sofie}.
\end{proof}

\begin{theorem}\label{Tcap}
  Let \set{\cP_\ga} be a finite or infinite family of hereditary graph
  classes and let $\cP=\bigcap_{\ga}\cP_\ga$.
Then $\bcp=\bigcap_{\ga}\bcp_\ga$.
\end{theorem}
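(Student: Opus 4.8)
The plan is to establish the two inclusions separately; the first is purely formal, the second carries the content and will be obtained from \refT{Tsofie}. Before starting I would record that $\cP=\bigcap_\alpha\cP_\alpha$ is itself hereditary: if $G\in\cP$ and $\tind(F,G)>0$, then $G\in\cP_\alpha$ for every $\alpha$, so $F\in\cP_\alpha$ for every $\alpha$ by \eqref{her}, i.e.\ $F\in\cP$. Hence \refT{Tsofie} is available for $\cP$ as well as for each $\cP_\alpha$.

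The inclusion $\bcp\subseteq\bigcap_\alpha\bcp_\alpha$ is then immediate: from $\cP\subseteq\cP_\alpha$ we get $\bcp\subseteq\bcp_\alpha$ for each $\alpha$ by monotonicity of closure, and we intersect over $\alpha$. For the reverse inclusion, take $\gG\in\bigcap_\alpha\bcp_\alpha$. If $\gG\in\cU$ is a graph, then since $\cU$ is discrete in $\bcu$ we have $\bcp_\alpha\cap\cU=\cP_\alpha$, so $\gG\in\cP_\alpha$ for every $\alpha$, whence $\gG\in\cP\subseteq\bcp$. If instead $\gG$ is a graph limit, I would apply \refT{Tsofie} to each $\cP_\alpha$: for every $\alpha$ and every $F\notin\cP_\alpha$ we have $\tind(F,\gG)=0$. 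Now if $F\notin\cP$, then $F\notin\cP_{\alpha_0}$ for some $\alpha_0$, so $\tind(F,\gG)=0$; thus $\tind(F,\gG)=0$ for every $F\notin\cP$, and \refT{Tsofie} applied to the hereditary class $\cP$ gives $\gG\in\bcp$.

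I do not expect a genuine obstacle here. The one point worth care is to pass through the $\tind$-characterization \refT{Tsofie} rather than the almost-sure characterization \refT{Tgngg} in the graph-limit case: for an uncountable family $\set{\cP_\alpha}$ the latter would formally require intersecting uncountably many almost-sure events $\set{G(n,\gG)\in\cP_\alpha}$. This is in fact harmless, since for fixed $n$ the random graph $G(n,\gG)$ has only finitely many possible values, so $\P\bigpar{G(n,\gG)\in\cP_\alpha}=1$ already forces $\tind(F,\gG)=0$ for all $F\notin\cP_\alpha$ with $|F|=n$; but routing the argument through \refT{Tsofie} makes this a non-issue.
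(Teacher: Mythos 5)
Your proof is correct and follows essentially the same route as the paper: both directions are handled via the $\tind$-characterization of \refT{Tsofie}, with the key step being that $F\notin\cP$ forces $F\notin\cP_{\alpha_0}$ for some $\alpha_0$ and hence $\tind(F,\gG)=0$. Your additional checks (that $\cP=\bigcap_\alpha\cP_\alpha$ is hereditary, and the separate treatment of the case $\gG\in\cU$ using $\bcp_\alpha\cap\cU=\cP_\alpha$) are points the paper leaves implicit, and your remark about avoiding \refT{Tgngg} for uncountable families matches the paper's own parenthetical comment.
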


Thus, for example, if a graph limit is the limit of some sequence $G_n$ of
graphs in $\cP_1$, and also of another such sequence $G_n'$ in $\cP_2$, then
it is the limit of some sequence $G_n''$ in $\cP_1\cap\cP_2$.
(This is not true in general, without the assumption that the classes are
hereditary. For example, let $\cP_2=\cU\setminus\cP_1$, with, say,
$\cP_1$ the class of interval graphs.)

\begin{proof}
  Suppose that $\gG\in \bigcap_{\ga}\bcp_\ga$. 
If $\tind(F,\gG)>0$, then \refT{Tsofie}\ref{tsofie1}$\Rightarrow$\ref{tsofie>0} 
shows that $F\in \cP_\ga$ for every $\ga$;
hence $F\in \cP$.
Consequently, \refT{Tsofie} in the opposite direction shows that $\gG\in\bcp$. 
The converse is obvious.
(Alternatively, one can use \refT{Tgngg} directly, with a little care if the
family \set{\cP_\ga} is uncountable.)
\end{proof}

\begin{remark}
  Conversely, we may ask whether every graph in $\cP$ can be obtained (with
  positive probability) as $G(n,\gG)$ for some $\gG\in\bcp$ and some $n$.
By \eqref{sofie}, the class of graphs obtainable in this way equals
$\bigcup_{\gG\in\bcp}\cI(\gG)$ where
\begin{equation}
  \cI(\gG)\=\set{F\in\cU:\tind(F,\gG)>0}.
\end{equation}
By \refT{Tsofie} (see also \cite{LSz:regularity}), 
$\bigcup_{\gG\in\bcp}\cI(\gG)\subseteq\cP$ for every hereditary graph class
$\cP$. 
\citet{LSz:regularity} have shown that equality holds if and only if $\cP$ has
the following property:
\begin{property}\label{Ptwin}
If $G\in\cP$ and $v$ is a vertex in $G$, and 
we enlarge $G$ 
by adding a twin $v'$ to $v$, \ie, a new vertex with the same
neighbours as $v$, 
then at least one of the two graphs
obtained by further either adding or not adding an edge $vv'$
belongs to $\cP$.
\end{property}
\end{remark}

\section{Forbidden subgraphs}\label{Sforbidden}

If $\cF$ is a (finite or infinite) family of (unlabelled) graphs,
we let $\cU_\cF$ be the class of all graphs that 
do not contain any graph from $\cF$ as an induced subgraph,
\ie,
\begin{equation}\label{uf}
\cU_\cF\=\set{G\in\cU:\tind(F,G)=0\text{ for } F\in\cF}.  
\end{equation}
This is evidently a hereditary class.

We similarly define
\begin{equation}\label{quf}
\cuq_\cF\=\set{\gG\in\cuq:\tind(F,\gG)=0 \text{ for } F\in\cF},
\end{equation}
and have the following simple result (\cite[Theorem 3.2]{SJ238}).

\begin{theorem}\label{Tquf}
Let $\cU_\cF$ be given by \eqref{uf}.
Then
$\cuqf=\cuq_\cF$.
In other words, if $\gG\in\cuoo$ is a graph limit, then $\gG$ is a limit of
a sequence of graphs in $\cU_\cF$ if and only if 
$\tind(F,\gG)=0$ for $F\in\cF$.
\end{theorem}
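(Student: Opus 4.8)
The plan is to deduce Theorem~\ref{Tquf} directly from the general machinery already established, since $\cU_\cF$ is manifestly a hereditary class. The inclusion $\cuq_\cF\subseteq\cuqf$ requires essentially no work: if $\gG\in\cuq_\cF$, then $\tind(F,\gG)=0$ for every $F\in\cF$, and every such $F$ lies outside $\cU_\cF$ by definition \eqref{uf}; but more is true, namely $\tind(F',\gG)=0$ for \emph{every} $F'\notin\cU_\cF$. Indeed, if $F'\notin\cU_\cF$ then $F'$ contains some $F\in\cF$ as an induced subgraph, so $\tind(F,F')>0$, and a standard ``supergraph'' relation for induced densities (if $\tind(F,F')>0$ and $\tind(F',\gG)>0$ then $\tind(F,\gG)>0$, because an induced copy of $F'$ in $\gG$ contains an induced copy of $F$) forces $\tind(F',\gG)=0$ once $\tind(F,\gG)=0$. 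Then \refT{Tsofie}, applied to the hereditary class $\cP=\cU_\cF$, gives $\gG\in\cuqf$.

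For the reverse inclusion $\cuqf\subseteq\cuq_\cF$: suppose $\gG\in\cuqf$, so $\gG$ is the limit of a sequence $G_n\in\cU_\cF$ with $|G_n|\to\infty$. For each $F\in\cF$ we have $\tind(F,G_n)=0$ for all $n$ by \eqref{uf}, and since $\tind(F,\cdot)$ extends continuously to $\bcu$ we get $\tind(F,\gG)=\lim_n\tind(F,G_n)=0$; hence $\gG\in\cuq_\cF$. This direction is immediate and uses only the continuity of the induced subgraph densities, which is part of the setup recalled in \refS{S:intro}.

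Alternatively, and perhaps most cleanly, one can observe that $\cuq_\cF$ is precisely the set of graph limits $\gG$ with $\tind(F,\gG)=0$ for all $F\notin\cU_\cF$: the ``only if'' part of this reformulation is the supergraph argument above, and the ``if'' part is trivial since $\cF\subseteq\cU\setminus\cU_\cF$. Granted this identification, Theorem~\ref{Tquf} is nothing but a restatement of \refT{Tsofie} for the particular hereditary class $\cU_\cF$. I would present the proof in this form, as a one-line appeal to \refT{Tsofie} together with the bookkeeping that $\tind(F,\gG)=0$ for all $F\in\cF$ is equivalent to $\tind(F',\gG)=0$ for all $F'\notin\cU_\cF$.

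The only point requiring any care — and thus the main obstacle, though it is a minor one — is verifying that monotonicity relation for induced densities: that $\tind(F,\gG)=0$ whenever $F$ is an induced subgraph of some $F'$ with $\tind(F',\gG)=0$. This is exactly the same fact that underlies the implication \eqref{her} used in the proof of \refT{Tgngg2}, so it is already implicitly available; one can either invoke it as folklore or note that it follows from \eqref{sofie} and the observation that a sample $G(|F'|,\gG)$ equal to $F'$ with positive probability would, by taking induced subgraphs, force $\tind(F,G(|F|,\gG))>0$. Everything else is definitional unwinding.
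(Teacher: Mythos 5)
Your proof is correct and in essence the same as the paper's: the easy inclusion is the identical continuity argument for $\tind(F,\cdot)$, and your converse rests on exactly the key fact the paper uses, namely that (via \eqref{sofie}) an induced subgraph of $G(n,\gG)$ is distributed as $G(m,\gG)$, so positive $\tind(F',\gG)$ propagates down to induced subgraphs $F$ of $F'$. The only difference is packaging: you phrase this as a monotonicity statement for induced densities and then invoke \refT{Tsofie} for the hereditary class $\cU_\cF$, whereas the paper shows directly that $G(n,\gG)\in\cU_\cF$ a.s.\ and invokes \refT{Tgngg}; the two routes are interchangeable and neither is circular, since both of those theorems precede \refT{Tquf} (your "$\tind(F,G(|F|,\gG))>0$" should read $\P\bigpar{G(|F|,\gG)=F}>0$, i.e.\ $\tind(F,\gG)>0$, a purely notational slip).
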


\begin{proof}
 If $G_n\to\gG$ with $G_n\in\cU_\cF$, then
  $\tind(F,\gG)=\lim_\ntoo \tind(F,G_n)=0$ for every
  $F\in\cF$, by \eqref{uf} and the continuity of $\tind(F,\cdot)$.
Thus $\gG\in\cuq_\cF$.

Conversely, suppose that $\gG\in\cuoo$ and
$\tind(F,\gG)=0$ for $F\in\cF$. 
It follows from \eqref{sofie} that if
$F\in\cF$ then, for any $n\ge1$, $G(n,\gG)\neq F$ a.s.;
thus $G(n,\gG)\notin\cF$ a.s.
Moreover, every induced subgraph of $G(n,\gG)$ has the same distribution as
$G(m,\gG)$ for some $m\le n$; hence a.s.\ no induced subgraph belongs to
$\cF$ and 
thus $G(n,\gG)\in\cU_\cF$.
Hence $\gG\in\cuqf$ by \refT{Tgngg}.
\end{proof}

\begin{remark}
Every hereditary class of graphs $\cP$ is of the form $\cU_\cF$ for some $\cF$;
  we can simply take $\cF\=\cU\setminus\cP$.
(In this case, \refT{Tquf} reduces to \refT{Tsofie}.)
However, we are mainly interested in cases when $\cF$ is a small family.
\end{remark}

\begin{example}\label{EK3}
The class of \emph{triangle-free graphs} is
 $\cU_{\set{K_3}}$.
\refT{Tquf} shows that the triangle free
  graph limits
(\ie, the limits of triangle free graphs) 
are the graph limits $\gG$ with 
  $\tind(K_3,\gG)=0$.
\end{example}

\begin{example}\label{Echordal0}
    The class of \emph{chordal graphs} or \emph{triangulated graphs}
is the class of all graphs not containing any induced $C_k$ with $k\ge4$,
\ie, $\cU_{\set{C_4,C_5,\dots}}$. 
(See \refE{Echordal} below and \cite[Section 1.2]{Brand} and
\cite[Chapter 4]{Golumbic} for other equivalent characterizations (and further
names).)
By \refT{Tquf}, a graph limit $\gG$ is a chordal graph limit if and only if
$\tind(C_k,\gG)=0 $ for every $k\ge4$.
\end{example}

\begin{example}\label{ECR}
  The class $\CR$ of \emph{cographs} equals $\cU_{\set{P_4}}$, see 
\cite[in particular Theorem 11.3.3]{Brand} where several alternative
characterizations are given.
Thus, if $\gG$ is a graph limit,
$\gG\in\overline{\CR}$  if and only if
$\tind(P_4,\gG)=0$. Such graph limits are studied in 
\citet{LSz:finitely}.
\end{example}

\begin{example}[\citet{SJ238}]
  The class $\cT$ of \emph{threshold graphs} equals $\cU_{\set{2K_2,P_4,C_4}}$
Thus, if $\gG$ is a graph limit, then
$\gG\in\bct$ if and only if
$\tind(P_4,\gG)=\tind(C_4,\gG)=\tind(2K_2,\gG)=0$.
\end{example}

\begin{example}
The class $\UI$ of \emph{unit interval graphs}
equals 
the class of graphs that contain no induced subgraph isomorphic to
$C_k$ for any  $k\ge4$, $K_{1,3}$, $S_3$ or $\overline S_3$, where $S_3$ is
the graph on 6 vertices \set{1,\dots,6} with edge set
$\set{12,13,23,14,25,36}$, and $\overline S_3$ is its complement
\cite[Theorem 7.1.9]{Brand}. 
Thus, if $\gG$ is a graph limit,
$\gG\in\overline{\UI}$  if and only if
$\tind(F,\gG)=0$ for every  
$F\in\set{C_k}_{k\ge4}\cup\set{K_{1,3},S_3,\overline S_3}$.
\end{example}

Further examples are studied in Sections \ref{Sdc}--\ref{Sclaw}.

We obtain just as easily a corresponding result for subclasses of a given
hereditary graph class obtained by forbidding induced subgraphs.

\begin{theorem}\label{Tqufp}
Let $\cP$ be a hereditary graph class and define, for a family $\cF$ of
graphs,
\begin{equation}
\cP_\cF\=\set{G\in\cP:\tind(F,G)=0 \text{ for } F\in\cF}=\cP\cap\cU_\cF.
\end{equation}
Then
\begin{equation}
\overline{\cP_\cF}=
\bcp_\cF\=\set{\gG\in\bcp:\tind(F,\gG)=0 \text{ for } F\in\cF}.  
\end{equation}
\end{theorem}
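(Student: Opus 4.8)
The plan is to imitate the proof of \refT{Tquf}, which is exactly the special case $\cP=\cU$, but carrying the extra constraint ``$G\in\cP$'' along at every step. Since $\cP_\cF=\cP\cap\cU_\cF$ and both $\cP$ and $\cU_\cF$ are hereditary, $\cP_\cF$ is hereditary, so \refT{Tgngg} applies to it. This suggests using the random-graph characterization rather than working with the closure directly.

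First I would prove the inclusion $\overline{\cP_\cF}\subseteq\bcp_\cF$. Let $\gG$ be a graph limit in $\overline{\cP_\cF}$, say $G_n\to\gG$ with $G_n\in\cP_\cF$. Since $G_n\in\cP$ we get $\gG\in\bcp$ by definition of $\bcp$; and since $G_n\in\cU_\cF$, continuity of $\tind(F,\cdot)$ gives $\tind(F,\gG)=\lim_n\tind(F,G_n)=0$ for every $F\in\cF$. Hence $\gG\in\bcp_\cF$. (This direction needs nothing beyond what is already in the proof of \refT{Tquf}.)

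For the reverse inclusion, let $\gG$ be a graph limit with $\gG\in\bcp$ and $\tind(F,\gG)=0$ for all $F\in\cF$. The goal is $\gG\in\overline{\cP_\cF}$, and by \refT{Tgngg} applied to the hereditary class $\cP_\cF$ it suffices to show $G(n,\gG)\in\cP_\cF$ a.s.\ for every $n\ge1$, i.e.\ $G(n,\gG)\in\cP$ a.s.\ and $G(n,\gG)\in\cU_\cF$ a.s. The first part is immediate from $\gG\in\bcp$ by \refT{Tgngg} (applied to $\cP$). The second part is exactly the argument in the proof of \refT{Tquf}: from \eqref{sofie} and $\tind(F,\gG)=0$ we get $G(n,\gG)\neq F$ a.s.\ for each $F\in\cF$; and since every induced subgraph of $G(n,\gG)$ is distributed as $G(m,\gG)$ for some $m\le n$, a.s.\ no induced subgraph of $G(n,\gG)$ lies in $\cF$, so $G(n,\gG)\in\cU_\cF$ a.s. Combining, $G(n,\gG)\in\cP_\cF$ a.s., and \refT{Tgngg} yields $\gG\in\overline{\cP_\cF}$.

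The only mild obstacle is bookkeeping: one must observe that $\cF$ may be infinite, so ``$G(n,\gG)\notin\cF$ a.s.'' and ``no induced subgraph in $\cF$ a.s.'' each require discarding a countable union of null events (one per $F\in\cF$, or per $F\in\cF$ and $m\le n$), which is harmless. Everything else is a direct transcription of \refT{Tquf}'s proof with $\cP$ carried along, so I expect the write-up to be just a few lines.
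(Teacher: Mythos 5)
Your argument is correct, but it takes a more laborious route than the paper. The paper proves \refT{Tqufp} in one line: since $\cP_\cF=\cP\cap\cU_\cF$ is an intersection of two hereditary classes, \refT{Tcap} gives $\overline{\cP_\cF}=\bcp\cap\overline{\cU_\cF}$, and \refT{Tquf} identifies $\overline{\cU_\cF}$ as $\set{\gG:\tind(F,\gG)=0 \text{ for } F\in\cF}$, which is exactly the claimed description. You noticed the decomposition $\cP_\cF=\cP\cap\cU_\cF$ (you use it to see that $\cP_\cF$ is hereditary) but then, instead of invoking \refT{Tcap}, you re-run the random-graph argument: apply \refT{Tgngg} to $\cP$ and to $\cP_\cF$, and repeat the heredity argument from the proof of \refT{Tquf} to get $G(n,\gG)\in\cU_\cF$ a.s. This is sound --- the countability worry you raise is vacuous anyway, since $\cF\subseteq\cU$ is automatically countable --- and it has the virtue of displaying the mechanism explicitly, but it duplicates work already done in \refT{Tquf} and \refT{Tcap} (whose proofs rest on the same \refT{Tgngg}/\refT{Tsofie} machinery via \eqref{sofie}). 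One tiny bookkeeping point either way: the asserted identity is between full closures, which also contain the graphs of $\cP_\cF$ themselves, not only graph limits; that part is immediate since $\bcp\cap\cU=\cP$, but it is worth a half-sentence if you write your version up.
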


\begin{proof}
  An immediate consequence of Theorems \ref{Tquf} and \ref{Tcap}.
\end{proof}

\begin{example}
The class $\UI$ of \emph{unit interval graphs}
equals also
the class of interval graphs that contain no induced subgraph isomorphic to
$K_{1,3}$
\cite[p.~111]{Brand}. 
Thus, if $\cI$ is the class of all interval graphs,
then $\UI=\cI_{\set{K_{1,3}}}$ and hence
$\overline{\UI}=\overline{\cI}_{\set{K_{1,3}}}$.
In other words,
if $\gG$ is a graph limit, then
$\gG\in\overline{\UI}$  if and only if $\gG\in\overline{\cI}$
and $\tind(K_{1,3},\gG)=0$.
\end{example}

We have here considered induced subgraphs. We obtain similar results if we
forbid general subgraphs. 
Let $\cU^*_\cF$ be the class of all graphs that 
do not contain any graph from $\cF$ as a subgraph,
\ie,
\begin{equation}\label{ufx}
\cU^*_\cF\=\set{G\in\cU:t(F,G)=0\text{ for } F\in\cF}.  
\end{equation}
This is evidently a hereditary class (and a monotone class, see \refE{mon}). 
In fact, this can be seen as a special
case of forbidding induced subgraphs, since $\cU^*_\cF=\cU_{\cfx}$, where
$\cfx$ is the family of all graphs $H$ that contain a spanning subgraph $F$
(\ie, a subgraph $F\subseteq H$ with $|F|=|H|$) with $F\in\cF$.

\begin{theorem}\label{Tqufx}
Let $\cU^*_\cF$ be given by \eqref{ufx}.
Then
\begin{equation}\label{qufx}
\overline{\cU^*_\cF}=\set{\gG\in\cuq:t(F,\gG)=0 \text{ for } F\in\cF}.  
\end{equation}
\end{theorem}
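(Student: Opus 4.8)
The plan is to deduce this from \refT{Tquf}, exploiting the reduction already noted above. Recall that $\cU^*_\cF=\cU_{\cfx}$, where $\cfx$ is the family of all graphs $H$ that admit a spanning subgraph $F\subseteq H$ with $F\in\cF$. Applying \refT{Tquf} with the family $\cfx$ in place of $\cF$ therefore gives at once
\begin{equation*}
\overline{\cU^*_\cF}=\overline{\cU_{\cfx}}=\set{\gG\in\cuq:\tind(H,\gG)=0\text{ for all }H\in\cfx}.
\end{equation*}
So the entire content of the theorem is the assertion that, for $\gG\in\cuq$, this last condition is equivalent to $t(F,\gG)=0$ for all $F\in\cF$.

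To establish that equivalence I would use the standard expansion of a subgraph density in terms of induced-subgraph densities. Fix a graph $F$ on $[k]$, represent $\gG$ by a graphon $W$ on $(\sss,\mu)$, and take $X_1,\dots,X_k$ \iid{} with distribution $\mu$ as in \refS{Sgraphons}. Inserting $1=W(X_i,X_j)+\bigpar{1-W(X_i,X_j)}$ for every pair $ij\notin E(F)$ into \eqref{tfwp} and expanding the resulting product, each way of selecting one of the two factors on each non-edge of $F$ corresponds to a graph $H$ on $[k]$ with $E(F)\subseteq E(H)$; comparing with \eqref{tindfwp} yields
\begin{equation*}
t(F,\gG)=\sum_H\tind(H,\gG),
\end{equation*}
the sum running over all graphs $H$ on $[k]$ that have $F$ as a spanning subgraph. (Rephrasing everything in terms of unlabelled graphs merely turns these into positive integer multiplicities, which is irrelevant below.) Since every term $\tind(H,\gG)\ge0$, it follows that $t(F,\gG)=0$ if and only if $\tind(H,\gG)=0$ for every graph $H$ having $F$ as a spanning subgraph.

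Combining the two displays: $t(F,\gG)=0$ for all $F\in\cF$ holds if and only if $\tind(H,\gG)=0$ for every graph $H$ having some member of $\cF$ as a spanning subgraph, \ie{} for every $H\in\cfx$, and this is precisely the condition characterizing $\overline{\cU^*_\cF}$ obtained from \refT{Tquf}. This gives \eqref{qufx}. The only point that needs a little care is the bookkeeping in the $t$--$\tind$ identity — labelled versus unlabelled graphs and the attendant multiplicities — but since that identity is used only to detect vanishing and all summands are nonnegative, the multiplicities play no role, so I do not expect any genuine obstacle; the real work is all contained in \refT{Tquf}, which is already available.
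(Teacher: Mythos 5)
Your proof is correct and follows the second of the two routes the paper itself indicates (its proof reads ``by \refT{Tquf} applied to $\cF^*$''), with the expansion $t(F,\gG)=\sum_{H\supseteq F\text{ spanning}}\tind(H,\gG)$ spelled out to justify the equivalence of the two vanishing conditions. So this is essentially the paper's argument, just with the details made explicit.
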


\begin{proof}
  By the argument in the proof of \refT{Tquf}, or by \refT{Tquf} applied to
  $\cF^*$. 
\end{proof}

There is also a version corresponding to \refT{Tqufp}.

\begin{example}\label{EK3b}
The class of \emph{triangle-free graphs} in \refE{EK3}
can also be defined as $\cU^*_{\set{K_3}}$.  
Thus  \refT{Tqufx}  applies and shows that the triangle free
  graph limits are the graph limits $\gG$ with $t(\gG,K_3)=0$. 
(This is in accordance with \refE{EK3} since
  $\tind(K_3,\gG)=t(K_3,\gG)$ for any graph limit $\gG$.)
\end{example}

\begin{example}
  The class  of \emph{bipartite graphs} equals $\cUU_{\set{C_3,C_5,\dots}}$.
Thus a graph limit $\gG$ is a limit of bipartite graphs if and only if
$t(C_k,\gG)=0$ for every odd $k\ge3$.
(We treat here bipartite graphs as a special case of simple graphs. 
Bipartite graphs, with an explicit bipartition, can also be treated
separately, with a corresponding but distinct limit theory, see \eg{}
\cite{SJ209}.) 
\end{example}

\section{Intersection graphs}\label{Sinter}

Consider the class $\pa$ of \aig{s} defined by a collection
$\cA$ of sets as in \refE{Einter}.
We define $W=W_\cA:\cA\times\cA\to\setoi$ by   
\begin{equation}\label{wa}
 W(A,B)=
 \begin{cases}
1 & \text{if } A\cap B\neq\emptyset,\\
0 & \text{if } A\cap B=\emptyset.
 \end{cases}
\end{equation}
Hence, a graph $G$ is an \aig{} if and only if there is a function $i\mapsto
A_i$ mapping $V(G)$ into $\cA$ such that the adjacency matrix of $G$ equals 
$(W(A_i,A_j))_{ij}$, except on the diagonal.

Equip $\cA$  with some suitable $\gs$-field making $W$
measurable and let 
$\mu$
be any probability measure on $\cA$. Then $W$ can be regarded as a
graphon defined on the probability space $(\cA,\mu)$, and defines thus, see
\refS{Sgraphons}, random graphs $G(n,W)$ and a graph limit $\gG$ such that
$G(n,W)\to\gG$ a.s.
By construction, the random graph $G(n,W)\in \pa$ for every $n$, and is thus
a random \aig; it follows that the graph limit $\gG\in\bpa$. 

This defines a graph limit $\gG\in\bpa$ for every probability measure $\mu$
on $\cA$. Note that we here use a fixed $W$ and vary $\mu$ to obtain
different graph limits, in contrast to the more common situation when $\mu$
is a fixed measure on some space $\sss$
(usually Lebesgue measure on \oi) and we vary $W$.
We therefore denote the graph limit obtained in this way from $\mu$ by
$\gG_\mu$. 

It is easily seen that every \aig{} $G$ can be obtained with positive
probability as $G(n,W)$ for a suitable $\mu$; if $G$ is defined by a family
$A_i\in\cA$, $i\in V(G)$, and $|V(G)|=n$,
take $\mu=\frac1n \sum_i \gd_{A_i}$, the
normalized sum of point masses at the points $A_i$.

The problem whether every \aigl{} can be represented as $\gG_\mu$ for some
probability measure $\mu$ on $\cA$
is more subtle.
We do not know any general results and give only some examples. 
Note that a class of intersection graphs typically
can be defined by several different families $\cA$, and it is conceivable
that the answer depends on the precise choice of the family $\cA$, and not just
on the resulting class of graphs. (Although we do not know any such
examples.)
We have not investigated this further, 
and we consider only some natural choices of $\cA$.

\begin{example}
  The class of \emph{interval graphs} can be defined as the \aigs{} where
  $\cA$ is the family \set{[a,b]:0\le a\le b\le 1}
of closed intervals of \oi. This family $\cA$ can be identified with a
closed subset of $\bbR^2$ (a triangle), and is then a compact metric space,
which we equip with the usual Borel $\gs$-field. 
It is shown in \cite{SJ254} that, with this choice of $\cA$,  every interval
graph limit is $\gG_\mu$ for some
(non-unique) probability measure $\mu$ on $\cA$.

Furthermore, \cite{SJ254} also gives similar results, 
with similar natural classes
$\cA$, for 
\emph{circular-arc graphs}, \emph{circle graphs} and \emph{permutation
  graphs}.
\end{example}

\begin{example}
However, it is also shown in \cite{SJ254} that the corresponding result
fails for the class of \emph{unit
  interval graphs}. This is the class of \aigs{} where
$\cA=\set{[x,x+1]:x\in\bbR}$, the family of closed unit intervals in
$\bbR$,
but not every unit interval graph limit can be represented as $\gG_\mu$ for
a probability measure $\mu$ on $\cA$.
(It is an open problem
whether there exists another family $\cA'$ defining the same class of graphs
such that every unit interval graph limit can be represented as $\gG_\mu$
for the family $\cA'$.)
\end{example}

\begin{example}\label{Ealla}
For a more trivial counterexample, 
let $\cA$ be the countable family of all finite 
subsets of $\bbN$; then every graph is an \aig.
(If $G$ is a graph, label the edges by integers and for each vertex $v$, let
$A_v$ be the set of the edges incident to $v$.)
However, a graph limit of the type $\gG_\mu$ is always random-free, see
\refS{Srf} below; since there are many graph limits that are not
random-free, not every graph limit can be represented as $\gG_\mu$.
\end{example}

\begin{example}\label{Echordal}
The class of \emph{chordal graphs}
defined in \refE{Echordal0} 
can also be defined as 
the class of intersection graphs of
subtrees in a tree  
\cite[Section 4.5]{Golumbic}. (In order to make this fit the formulation in
\refE{Einter}, we take a countably infinite universal tree $T$, containing
all finite 
trees as subtrees, for example constructed by taking disjoint copies of all
finite trees and joining them to a common root. We then let $\cA$ be the
family of all finite subtrees of $T$.)

We shall see in
\refE{Echordal2} that not every chordal graph limit is 
random-free, and thus not every chordal graph limit can be represented as
$\gG_\mu$
(for this or any other family $\cA$).

We do not know any natural representation of all
chordal graph limits, and leave
that as an open problem.
\end{example}

\section{Random-free graph limits and classes}\label{Srf}

A graphon $W$ is said to be \emph{random-free} if it is \oivalued\ a.e.
(In this case, the random graph $G(n,W)$ depends only on the random points
$X_1,X_2,\dots$, without further randomization, which is a reason for the
name.) If $W_1$ and $W_2$ are two graphons that represent the same graph
limit, and one of them is random-free, then both are, see \cite{SJ249} for a
detailed proof. Consequently, we can define a graph limit $\gG$ to be
random-free if some representing graphon is random-free, and in this case
every representing graphon is random-free.

Random-free graphons are studied in \cite[Section 10]{SJ249}, where it is
shown, for example, that a graph limit $\gG$ is random-free if and only if 
the random graph $G(n,\gG)$ has entropy $o(n^2)$ (thus quantifying a sense
in which there is less randomness than otherwise).
Another result, see \cite{Pikhurko} and \cite{SJ249}, 
is that $\gG$ is random-free if and only if
it is a
limit of a sequence of graphs in the stronger metric $\gd_1$.

\citet{LSz:regularity} define a graph property to be
\emph{random-free} if every graph limit $\gG\in\bcp$ is random-free.
They show some consequences of this. Moreover,
they show that
  a hereditary graph property $\cP$ is random-free if and only if
the following property holds:
\begin{property}\label{Prf}
There exists a bipartite graph $F$ with bipartition $(V_1,V_2)$ such that no
graph obtained from $F$ by adding edges within $V_1$ and $V_2$ is in $\cP$.
\end{property}

The representation theorems in \cite{SJ254} show that the class of
\emph{interval graphs} is random-free, together with the classes of 
\emph{circular-arc graphs} and \emph{circle graphs}. 
(Of course, then every subclass is also random-free, for
example the classes of \emph{unit interval graphs} and \emph{permutation graphs}
also studied in \cite{SJ254} and
the class of \emph{threshold graphs} for which random-free graphons were
found  in \cite{SJ238}.)

\begin{remark}
We do not know explicit examples of graphs $F$ satisfying \ref{Prf} for 
the random-free classes of graphs just mentioned, but we guess that small
examples exist. 
\end{remark}

However, not every class of intersection graphs is random-free. 

\begin{example}\label{Ealla2}
Let $\cA$ be the family of
finite subsets of some infinite set (for example $\bbN$). 
Then, as said in \refE{Ealla},
every graph is an $\cA$-intersection graph.  
Consequently, every graph limit is an $\cA$-intersection graph limit, and
the class of \aig{s} is not random-free.
\end{example}

\begin{example}\label{Echordal2}
The class of \emph{chordal graphs} 
can be defined in several ways, see \refE{Echordal0} and
\refE{Echordal}; it  is  an example both of a graph class defined by
forbidden induced subgraphs and a class of intersection graphs.

We show that this class does not satisfy \ref{Prf}; thus it is
\emph{not} random-free. 
The class of chordal graphs is thus a non-trivial class of intersection
graphs that is not 
random-free.

Let $F$ be a bipartite graph with bipartition $(V_1,V_2)$, and
let $F_1$ be the graph obtained 
by adding all edges inside $V_1$ to $F$.
We shall show that $F_1$ is chordal, which shows that \ref{Prf} does not hold.

Assume that a cycle $C$ is an induced subgraph of $F_1$. If $C$ has two
vertices in $V_1$, then these are adjacent in $F_1$ so they have to be
adjacent in $C$. Thus, $C$ has at most 3 vertices in $V_1$; if it has 3,
then $C$ has no other vertices, and if it has 2, they have to be adjacent in
$C$. Hence, there are at most 2 edges in $C$ than go between $V_1$ and
$V_2$. Since $F_1$, and thus $C$, has no edges inside $V_2$, it follows that
$C$ has at most one vertex in $V_2$. Consequently, $C$ has in every case at
most 3 vertices. 

We have shown that $F_1$ has no induced cycle of length greater than 3, \ie,
$F_1$ is chordal. Thus no graph $F$ as in \ref{Prf} exists, so by the result
of \citet{LSz:regularity} stated above, the class of chordal graphs is
\emph{not} random-free. 

In fact, we can easily construct a graphon that defines a chordal graph
limit but is not
random-free. Let $\sss=\setoi$ be a space with two points, say with measure
$\frac12$ each, and let $\wch$ be the graphon $\wch(x,y)\=(x+y)/2$ defined on
$\sss$. (Alternatively, one can take $\sss=\oi$ and
$\wch(x,y)\=(\floor{2x}+\floor{2y})/2$.) 
The random graph $G(n,\wch)$ is of the type of $F_1$ above (with $V_1$
the set of vertices $i$ with $X_i=1$); thus the argument above shows that
$G(n,\wch)$ is chordal. 
By \refT{Tgngg}, the graph limit $\ggx{\wch}$ generated by the graphon $\wch$ is
a chordal graph limit, and it is evidently not random-free.
(See also \refT{Tchordalrandom}.)
\end{example}

\begin{problem}
  Investigate for other classes of intersection graphs 
whether they are random-free or not.
\end{problem}

We turn to graph classes determined by forbidden subgraphs, noting that
the class of chordal graphs in
\refE{Echordal2} is one example of such a graph class that is not
random-free. 
Again, we have no general theorem and give
just one more example.

\begin{example}
The class $\CR$ of \emph{cographs}, see \refE{ECR}, is random-free.
This is shown in \citet{LSz:finitely} 
for regular graphons in $\overline{\CR}$;
we show the general case by verifying 
\ref{Prf},  using the result of \citet{LSz:regularity} stated above.
We take the graph $F$ as the bipartite graph with 12 vertices
$A,B,C,D,E,F,a,b,\allowbreak c,\allowbreak d,e,f$ and edge set 
$$
\set{Aa, Bb, Cc, Dd, Ee, Ff, Ac, Bd,  Ae, Bf, Ca, Cb, Db, Fa}.
$$
Suppose that there exists a graph $F_1$ obtained by adding edges within 
$V_1=\set{A,B,C,D,E,F}$ and $V_2=\set{a,b,c,d,e,f}$ to $F$ such that
$F_1\in\CR$, \ie, $F_1$ contains no induced $P_4$. 
If exactly one of $AB$ and $ab$ is an edge in $F_1$, then $aABb$ or $AabB$
is an induced $P_4$; hence either both are edges or neither is; let us write
this as $AB\iff ab$. Similarly, $CD\iff cd$, $EF\iff ef$, $AB\iff cd$,
$AB\iff ef$. Consequently, if $AB$ is an edge, then so are $ab$, $ef$ and $EF$,
and then $EFab$ is an induced $P_4$; conversely, if $AB$ is not an edge then
neither is $ab$ nor $CD$ and then $aCbD$ is an induced $P_4$.
Hence no such graph $F_1$ without induced $P_4$ exists, so \ref{Prf} holds.
\end{example}

\begin{problem}
  Investigate for other classes of graphs with forbidden (induced) subgraphs
whether they are random-free or not.
\end{problem}

\section{Disjoint cliques}\label{Sdc}

A \emph{clique} is a complete graph $K_n$, $n\ge1$. 
A \emph{\dc{} graph} is a graph that is a disjoint union of cliques.
(Note that this includes the possibility of isolated vertices.)
We let $\DC$ be the class of \dcg.

A graph is a \dcg{} if and only if the extended adjacency relation ($x\sim
y$ or $x=y$) is transitive, and thus an equivalence relation.
(We may thus also call these graphs \emph{transitive}.)
Consequently, a graph is a \dcg{} if and only if it has no induced subgraph
$P_3$, \ie, 
\begin{equation}\label{dcp3}
\DC=\cU_{\set{P_3}}
\=\set{G\in\cU:\tind(P_3,G)=0}.  
\end{equation}

Note also that $\DC$ is the class of \aig{s} where $\cA$ is any infinite
family of disjoint sets, for example $\cA=\set{\set i:i\in \bbN}$.

By \refT{Tquf},  \eqref{dcp3}
yields immediately the corresponding characterization of \dcgl{s}.
\begin{theorem}\label{TDC}
$\overline{\DC}=\cuq_{\set{P_3}}$. Hence,
  a graph limit $\gG$ is a \dcgl{} if and only if $\tind(P_3,\gG)=0$.
\nopf
\end{theorem}

We can obtain another, more explicit, description of the \dcgl{s} as
follows.
Note that a \dcg{} is determined by the partition of the vertex set into
connected components; an unlabelled \dcg{} is thus determined by the
sequence of component sizes.
In particular, the number of labelled \dcg{s} of order  $n$ is the Bell
number $B(n)$ and the number of unlabelled \dcg{s} of order $n$ is the number
$p(n)$ of partitions of $n$, see \eg{} \cite{FlajoletS}.
(Asymptotics are given in \cite[Example VIII.6 and Section VIII.6]{FlajoletS}.)

We denote for any graph $G$ the component sizes by $C_1(G)\ge
C_2(G)\ge\dots$, ordered in decreasing order and extended by $G_k(G)=0$ when
$k$ is larger than the number of components.

A \emph{mass-partition} is a sequence $\bs=(s_i)\iii$ of non-negative real
numbers such that
\begin{equation*}
s_1\ge s_2\ge\dots\ge0
\qquad \text{and} \qquad
\sumi s_i\le1.
\end{equation*}
(We think of breaking a unit mass into pieces of sizes $s_1,s_2,\dots$,
together with ``dust'', \ie, infinitesimal pieces, of mass $1-\sumi s_i\ge0$.)
The set $\MP$ of all mass-partitions is given the metric
$d(\bs,\bs')\=\max_i|s_i-s_i'|$ and is then a compact metric space; the
topology equals the topology of pointwise convergence (\ie, convergence of
each $s_i$ separately); for these and other properties, see \cite[Section
  2.1]{Bertoin}. 

Note that each graph $G$ defines a mass-partition $(C_i(G)/|G|)\iii$.

We define, for a mass-partition $\bs$,
\begin{equation}
S_k(\bs)\=\sumi s_i^{k},
\qquad k\ge1.
\end{equation}
\begin{lemma}\label{LMP}
  \begin{thmenumerate}
  \item \label{lmp1}
  If $\bs$ and $\bs'$ are mass-partitions, then $\bs=\bs'$ if and only if
$S_k(\bs)=S_k(\bs')$ for every $k\ge2$.
\item \label{lmp2}
  If $\bs\nn$ is a sequence of mass-partitions, then $\bs\nn\to\bs$ in $\MP$
if and only if
$S_k(\bs\nn)\to S_k(\bs)$ for every $k\ge2$.
  \end{thmenumerate}
\end{lemma}

\begin{proof}
  \pfitemref{lmp1}
We have $s_1=\sup_i s_i=\lim_{k\to\infty} S_k(\bs)^{1/k}$. Hence
$S_k(\bs)=S_k(\bs')$ for all large $k$ implies $s_1=s_1'$, and it follows by
induction that $s_i=s_i'$ for all $i$. The converse is obvious.

  \pfitemref{lmp2}
Since $\bs\in\MP$ implies $s_i\le 1/i$, it follows by dominated convergence
that if $\bs\nn\to\bs$, then $S_k(\bs\nn)\to S_k(\bs)$ for each $k\ge2$.

For the converse we note that this means that the
map $\Phi:\bs\mapsto (S_k(\bs)_{k=2}^\infty$ is a
continuous map of $\MP$ into $\oi^\infty$. Moreover, by  \ref{lmp1}, this
map is one-to-one. Since $\MP$ is compact, it follows that $\Phi$ is a
homeomorphism. 
\end{proof}

\begin{remark}
In \refL{LMP}\ref{lmp1}, we then also have $S_1(\bs)=S_1(\bs')$, but in
\ref{lmp2}, it does not follow that $S_1(\bs)\to S_1(\bs')$; for an example,
let $s_i\nn=1/n$ for $i\le n$; then $\bs\nn\to0$ but $S_1(\bs\nn)=1$ for
each $n$.  
\end{remark}

Given a mass-partition $\bs$ we define a graphon $\ws$ by taking disjoint
subsets $(A_i)\iii$ of a \ps{} $(\sss,\mu)$ such that $\mu(A_i)=s_i$ and
defining 
$\ws\=\sumi \etta_{A_i\times A_i}$.
It is easily seen from the definition \eqref{tfw} (or \eqref{tfwp}) that if
$F$ is connected and $|F|=k\ge2$, then
\begin{equation}\label{tws}
  t(F,\ws)=S_k(\bs)\=\sumi s_i^{k}.
\end{equation}
Since $t(K_1,W)=1$ for any graphon $W$, it follows by \eqref{fcomp} that 
the graphons $\ws$ defined in this way all are equivalent and thus define a
unique graph limit $\ggs$. (This is an example of a direct sum of
(infinitely many) graph limits, see \cite{SJ213}.)

\begin{remark}
One canonical choice of $\ws$ is to take $\sss \=\bbN\cup\set\infty$ with
$\mu\set{i}=s_i$, $i<\infty$, and thus $\mu\set\infty=1-\sumi s_i$,
letting $A_i=\set i$. Then $\ws(x,y)=\ett{x=y<\infty}$.
(An related construction is to take $\ws(x,y)\=\ett{x=y}$ on a \ps{} with
point masses $s_1,s_2,\dots$, and the rest of the mass in a continuous part.)

Another canonical choice is to take $(\sss,\mu)=(\oi,\gl)$ and let
$A_i=[\sum_1^{i-1} s_j,\sum_1^{i} s_j)$.  
\end{remark}

\begin{theorem}\label{TDC2}
 The set $\DCoo$ of \dcgl{} is the set $\set{\ggs:\bs\in\MP}$, and the
 mapping $\bs\mapsto\ggs$ is a homeomorphism of $\MP$ onto $\DCoo$.

If\/ $G_n$ is a sequence of disjoint clique
graphs with $|G_n|\to\infty$ and 
$\gG$ is a graph limit, 
then
$G_n\to\gG$ if and only if\/
$\gG=\ggs$ for some 
$\bs\in\MP$, and
$(C_i(G_n)/|G_n|)_i\to\bs$ in $\MP$.
\end{theorem}

\begin{proof}
\refL{LMP}, \eqref{tws} and \eqref{fcomp} show that $\bs\mapsto t(F,\ws)$ is
continuous for every graph $F$, which is the same as saying that
$\bs\to\ggs$ is a continuous map $\MP\to\cuoo$. Since $\MP$ is compact and
the map is one-to-one by \eqref{tws} and \refL{LMP}, it is a homeomorphism
onto a closed subset of $\cuoo$, which we temporarily denote by $K$.

Let $G_n$ be a sequence of disjoint clique graphs with $G_n\to\infty$
and let
$\bcn$ be the mass-partition 
$(C_i(G_n)/|G_n|)_i$ defined by $G_n$.
It is easy to see that
if $F$ is connected and $|F|=k\ge2$, then, using \eqref{tws},
\begin{equation}
t(F,G_n)=S_k(\bcn)+O(1/|G_n|)  = S_k(\bcn)+o(1)
=t(F,\ggMP_{\bcn})+o(1).
\end{equation}
Hence, using \eqref{fcomp}, for any graph limit $\gG$, 
\begin{equation}\label{ros}
  G_n\to\gG
\iff
\ggMP_{\bcn}\to\gG.
\end{equation}
Since $\ggMP_{\bcn}\in K$ and $K$ is a closed subset of $\cuoo$, it follows
that if $G_n\to\gG$, then $\gG\in K$, so $\gG=\ggMP_{\bs}$ for some $\bs\in\MP$.
Furthermore, since we have shown that $\bs\mapsto\ggMP_{\bs}$ is an
homeomorphism,
\eqref{ros} also implies that $G_n\to\ggs$ if and only if $\bcn\to\bs$.
It follows also that $\xoo{\DC}=K$.
\end{proof}

\begin{example}\label{E01}
  The extreme cases of \dcg{s} are the complete graph $K_n$ and the empty
  graph $E_n=\overline{K_n}$; 
they converge as \ntoo{} to the graph limits $\ggi$ and 
$\ggo$
defined by
the constant graphons 1 and 0, respectively.
Note that $\ggi$ and $\ggo$ are the graph limits $\ggs$  defined by the
mass-partitions $(1,0,0,\dots)$ and $(0,0,\dots,)$.
(The families \set{K_n} and \set{E_n} are themselves examples of hereditary
classes, although quite trivial.)
\end{example}

We can easily obtain limit results for uniformly random clique graphs. We
give both an unlabelled and a labelled version.

\begin{theorem}\label{Trcg}
  \begin{thmenumerate}
  \item 
 If $G_n$ is a uniformly random unlabelled \dcg{} of order $n$, then
$G_n\pto \ggo$ as \ntoo.
\item 
If $H_n$ is a uniformly
random labelled \dcg{} of order $n$, then
$H_n\pto \ggo$ as \ntoo.
  \end{thmenumerate}
\end{theorem}

\begin{proof}
(i): In this case,
$(C_i(G_n))_i$ is a uniformly random partition of $n$, and it is well-known that
$C_1(G_n)$ is of the order $\sqrt n \log n$ (see \citet{ErdosL} and
\citet{Fristedt} for more precise results). Hence $C_1(G_n)/n\pto0$ and 
thus $(C_i(G_n)/n)_i\pto(0,0,\dots)$ in $\MP$.
Consequently, $G_n\pto \ggo$ by \refT{TDC2}.

(ii):
Similarly, 
$C_1(H_n)$ is the size of the largest part in a uniformly random set
partition of $[n]$.
It follows from the asymptotics of the Bell numbers that
$C_1(H_n)/n\pto0$, see \citet{Sachkov} for a much more precise result.
Thus, similarly to (i),  $H_n\pto \ggo$ by \refT{TDC2}.
\end{proof}

\begin{remark}\label{Rug}
For any graphs $G_n$ with $|G_n|\to\infty$,
 $G_n\to\ggo$ if and only if the number of edges $e(G_n)=o(n^2)$.
Hence, these results for random \dcg{s} are equivalent to
$e(G_n)/n^2\pto0$ and $e(H_n)/n^2\pto0$.
\end{remark}

\begin{example}\label{EDC1}
  Let $\dci$ be the subclass of $\DC$ consisting of disjoint clique
graphs with at most one non-trivial clique, \ie, the graphs that are the
disjoint union of a $K_m$ and an $E_{n-m}$, with $1\le m\le n$.
Thus, if $G\in\cD$, then $C_2(G)\le1$. It follows easily from \refT{TDC2}
that the set $\xoo{\dci}$ is the subset of $\DCoo$ consisting of the graph
limits $\ggd_t\=\ggMP_{\bs_t}$, $t\in\oi$,
where $\bs_t$ is the mass-partition $(t,0,\dots)$; note that $\ggMP_{\bs_t}$
is represented by the graphon $\etta_{[0,t]\times[0,t]}$ on $(\oi,\gl)$.
(Thus, $\ggd_0=[0]$ and $\ggd_1=[1]$, \cf{} \refE{E01}.)

An unlabelled graph in $\dci$ is determined by the numbers $n$ and $m$ above.
Hence, if we let $G_n$ be a uniformly random unlabelled graph of order $n$ in
$\dci$, then $m$ is uniformly distributed over \set{1,\dots,n}. It follows
that $G_n\dto \ggd_T$, where the random graph limit $\ggd_T$ has $T\in\oi$
random and uniformly distributed. This is thus an example of a hereditary
class where a uniformly random graph $G_n$ converges in distribution to some
random (and non-deterministic) graph limit.

On the other hand, it is easily seen that a uniformly random labelled graph
$G_n$ in $\dci$ converges in probability (and thus in distribution)
to the non-random $\ggd_{1/2}$.
In fact, since the decomposition $K_m\cup E_{n-m}$ is unique when $m\ge2$,
it is easily seen that the random graph $G(n,\ggd_{1/2})$ is almost uniformly
distributed, in the sense that the total variation distance
$\dtv(G_n,G(n,\ggd_{1/2}))\to0$ as \ntoo;  thus $G_n\pto \ggd_{1/2}$
follows from \eqref{gnwlim}.
\end{example}

\section{Line graphs}\label{Slg}

The \emph{line graph} $L(G)$
of a graph $G$ has as vertices the edges of $G$, with
$e$ and $f$ adjacent in $L(G)$ if they have a common endpoint in $G$.
Let $\LG$ be the class of line graphs.

There are several other, equivalent, characterizations, see \cite[Theorem
  7.1.8]{Brand}. In particular, there is a set $\cflg$ of 9 graphs such that
a graph is a line graph if and only if it does not have any induced subgraph
in $\cflg$, \ie,
$\LG=\cU_{\cflg}$. (See also \citet{Soltes} and \citet{LaiSoltes}.)
\refT{Tquf} yields immediately the corresponding characterization of \dcgl{s}.
\begin{theorem}\label{TLG}
$\overline{\LG}=\cuq_{\cflg}$. Hence,
  a graph limit $\gG$ is a \lgl{} if and only if $\tind(F,\gG)=0$ for
  $F\in\cflg$. 
\nopf
\end{theorem}

The line graph of a star is a complete graph, and therefore every 
\dcg{} is a line graph (\viz{} the line graph of a disjoint union of
stars). There are many other line graphs, but we shall see that the line
graph limits are the same as the limits of the subclass of \dcg{s}.

\begin{lemma}\label{LLG}
  If $G$ is a line graph of order $n$, then $G$ has a subgraph $H$ that is
  a \dcg{} 
  with $V(H)=V(G)$ 
and $|E(G)\setminus E(H)|\le 4 n^{5/3}=o(n^2)$.
\end{lemma}

\begin{proof}
The line graph  $G$ is a union of edge-disjoint cliques $\cC_i$, with each
vertex in at most  two $\cC_i$. 
(If $G=L(G')$, then $\cC_i$ is the set of edges in $G'$ incident to a vertex
$i$ in $G'$.)
Note that thus $\sum_i|\cC_i|\le2n$.

Let $H_1\=V(G)\cup\bigcup\set{\cC_i:|\cC_i|>n\qqqb}$, 
where $V(G)$ is seen as an empty graph;
\ie, $H$ equals $G$ with all edges in
cliques $\cC_i$ with $|\cC_i|\le n\qqqb$ removed.
The number of removed edges is
\begin{equation*}
  \begin{split}
  |E(G)\setminus E(H_1)|
& =\sum_{|\cC_i|\le n\qqqb} \binom{|\cC_i|}{2}
\le\sum_{|\cC_i|\le n\qqqb}|\cC_i|^{2}
\le n\qqqb\sum_{|\cC_i|\le n\qqqb}|\cC_i|
\\&
\le 2n\cdot n\qqqb = 2 n^{5/3}.	
  \end{split}
\end{equation*}

Since $\sum_i|\cC_i|\le 2n$, there are at most $2n/n\qqqb=2n\qqq$ remaining
cliques.
Since two cliques have at most one common vertex (they are edge-disjoint),
the number of vertices that belong to 2 cliques is at most
$\binom{2n\qqq}{2}\le 2n\qqqb$. Delete also all edges incident to any these
vertices. This 
leaves a graph $H\subseteq H_1$ that is a union of disjoint cliques, and
\begin{equation*}
  |E(H_1)\setminus E(H)| \le 2 n\qqqb\cdot n = 2 n^{5/3}.
\end{equation*}
Hence $|E(G)\setminus E(H)| \le 4 n^{5/3}$.
\end{proof}

\begin{theorem}\label{TLG=}
  $\xoo{\LG}=\xoo{\DC}$,
\ie, a graph limit is a \lgl{} if and only if it is a \dcgl. 
Hence, the set $\xoo\LG$ of \lgl{s} is
  the set \set{\ggs:\bs\in\MP}.
\end{theorem}

\begin{proof}
  Let $G_n$ be a sequence of line graphs with $G_n\to\gG$.
By \refL{LLG}, we may select disjoint clique graphs $H_n\subseteq G_n$
such that $V(H_n)=V(G_n)$ and $|E(G_n)\setminus V(H_n)|=o(|G_n|^2)$.
It follows easily that for any graph $F$, $t(F;G_n)=t(F,H_n)+o(1)$, and thus
$t(F,H_n)=t(F,\gG)+o(1)$; hence $H_n\to\gG$. This shows that any line graph
limit $\gG$ is a \dcgl. The converse is obvious.
\end{proof}

\begin{corollary}\label{CLG=}
  If $\gG$ is a graph limit, then $\tind(F,\gG)=0$ for every $F\in\cF_L$ if and
  only if $\tind(P_3,\gG)=0$.
\end{corollary}

\begin{proof}
  By Theorems \ref{TDC}, \ref{TLG} and \ref{TLG=}.
\end{proof}

\begin{remark}
Note that \refC{CLG=} holds for graph limits but not for graphs.
(There are graphs $G$ with
 $\tind(F,G)=0$ for every $F\in\cF_L$ but $\tind(P_3,G)>0$, for example
$G=P_3$.)

Furthermore, we see that every line graph limit $\gG$ satisfies
$\tind(P_3,\gG)=0$, although line graphs may contain $P_3$. 
Note that if $G_n\to\gG$,  then $\tind(P_3,G_n)\to \tind(P_3,\gG)=0$, 
and it follows easily (using compactness and subsequences; we omit the
details) that if $G_n$ is any sequence of line graphs with $|G_n|\to\infty$,
then $\tind(P_3,G_n)\to0$. \refL{LLG} implies that
$\tind(P_3,G_n)=O(|G_n|\qqqw)$; we do not know whether this rate
is the best possible.
\end{remark}

We conjecture that a uniformly random line graph (labelled or unlabelled)
converges in probability to $\ggo$, just as random disjoint clique
graphs do by \refT{Trcg}, but we have not
investigated this further.

\section{Ramsey's theorem}\label{SRamsey}

Ramsey's theorem says that for every $r\ge1$, every sufficiently large graph
contains either $K_r$ or its complement $E_r$ as an induced
subgraph. (See further \cite{Ramsey}.)
In the notation of \refS{Sforbidden}, the theorem says that the
graph class $\cU_{\set{K_r,E_r}}$ is finite, which by Theorems \ref{T0} and
\ref{Tquf} are equivalent to the following:

\begin{theorem}[Ramsey's theorem for graph limits]\label{Tramsey1}
  For every $r\ge1$,
 \begin{equation*}
\bbcu_{\set{K_r,E_r}}\=\set{\gG\in\bbcu:\tind(K_r,\gG)=\tind(E_r,\gG)=0}
=\emptyset.	
  \end{equation*}
\end{theorem}

\begin{proof}
We have just given a proof from the classical Ramsey theorem.
We find it instructive to also give a direct proof using graph limits; this
thus yields a graph limit proof of Ramsey's theorem.
(This is a simple analogue of more advanced results such as the hypergraph
removal lemma that can be proved using (hyper)graph limits, see \eg{} 
\cite{ElekSz} and \cite{Tao}.)

By \eqref{tindfw}, the statement can be formulated as: there is no graphon
$W$ such that 
\begin{equation}\label{ramsey}
  \prod_{1\le i<j\le r} W(x_i,x_j)
=
  \prod_{1\le i<j\le r} (1-W(x_i,x_j))
=0
\end{equation}
for \aex{} $x_1,\dots,x_r\in\oi$.
To see that \eqref{ramsey} yields a contradiction, the problem is the
``a.e.'',
which we handle by \refL{LD} below, which shows that we may modify $W$ on a
null set such that \eqref{ramsey} holds for all $x_1,\dots,x_r$ such that
$(x_i,x_j)\in E$ for some set $E$ of full measure and containing the
diagonal \set{(x,x)}. We may then take $x_1=\dots=x_r=x$ for any fixed
$x\in\oi$, and obtain $W(x,x)^{\binom r2} = (1-W(x,x))^{\binom r2}=0$,
and thus $W(x,x)=0$ and $W(x,x)=1$, a contradiction.
\end{proof}

The required lemma is a version of \cite[Lemma 5.3]{SJ234} for
several polynomials simultaneously. 
Since the version in \cite{SJ234} is stated for a single polynomial, 
we give a detailed statement.
A \emph{multiaffine} polynomial is a polynomial in several
variables $\set{x_\nu}_{\nu\in\cI}$, for some
(finite) index set $\cI$, such that each variable has degree at
most 1; it can thus be written as a sum
$\sum_{\cJ\subseteq\cI} a_{\cJ}\prod_{\nu\in \cJ} x_\nu$.

\begin{lemma}\label{LD}
Suppose that\/ $W:\oi^2\to\oi$ is a graphon.
Then there is a version\/ $W'$ of\/ $W$, \ie{} a 
graphon $W'$ such that $W'=W$ \aex{},
and a symmetric set  $E\subseteq\oi^2$
such that $\gl(\oi^2\setminus E)=0$ and $E\supseteq\set{(x,x):x\in\oi}$, and, 
moreover, if $\Phi\bigpar{(w_{ij})_{i<j}}$ is a multiaffine polynomial in the
$\binom m2$ variables $w_{ij}$, $1\le  i<j\le m$, for some $m\ge2$,
such that $\Phi\bigpar{(W(x_i,x_j))_{i<j}}=\gam$ 
for \aex{} $\xxm\in\oi$ and some $\gam\in\bbR$,
then $\Phi\bigpar{(W'(x_i,x_j))_{i<j}}=\gam$ 
for all  $x_1,\dots,x_m$ such that $(x_i,x_j)\in E$ for
  every pair $(i,j)$ with $1\le i<j\le m$.
\end{lemma}

\begin{proof}
  This is proved in \cite[Appendix]{SJ234} for a single polynomial $\Phi$.
However, an inspection of the proof shows that $W'$ and $E$ are constructed
independently of $\Phi$, so the same choices work for any such $\Phi$.
\end{proof}

Furthermore, it is easy to see that Ramsey's theorem is equivalent to the
following:

\begin{theorem}[Ramsey's theorem in disguise]\label{Tramsey2}
  If $\cP$ is an infinite hereditary graph class, then either $\cP$ contains
  every complete graph $K_n$, $n\ge1$, 
or $\cP$ contains every empty graph $E_n$, $n\ge1$.
\end{theorem}
\begin{proof}
  Since $\cP$ is hereditary and contains arbitrarily large graphs, Ramsey's
  theorem implies that for every $r$, $K_r\in\cP$ or $E_r\in\cP$. Thus at
  least one of $K_r\in\cP$ or $E_r\in\cP$ holds for arbitrarily large $r$,
  and thus for all $r$ since $\cP$ is hereditary.
\end{proof}

As a corollary we immediately obtain the following simple result.
(Compare the weaker but more general \refT{T0}.)

\begin{theorem}\label{Tramsey3}
  If $\cP$ is an infinite hereditary graph class, then $\cpoo$ contains either
 $\ggo$  or $\ggi$ (or both). 
\end{theorem}
\begin{proof}
  By \refT{Tramsey2}, since $K_n\to\ggi$ and $E_n\to\ggo$ as \ntoo.
\end{proof}

Conversely, \refT{Tramsey3} implies \refT{Tramsey2} by \refT{Tsofie}, so
\refT{Tramsey3} may also be regarded as a graph limit version of Ramsey's
theorem. 

\refT{Tramsey3} exhibits a minimum content of $\cpoo$ for
a hereditary class $\cP$. It is best possible; 
the hereditary classes \set{K_n} and \set{E_n} 
show that $\cpoo=\set{[1]}$ and $\cpoo=\set{[0]}$ both are possible.
(Cf.~\refE{E01}.)
Furthermore,
\refT{Tclaw} gives an example with
$\cpoo=\set{\ggo,\ggi}$ and the class $\cQ$ defined after it is another example
with $\cqoo=\set{\ggo}$.

\section{Split graphs}

A \emph{split graph} is a graph whose vertex set can be partitioned as
$V_0\cup V_1$ such that the subgraph induced by $V_0$ is empty and the
subgraph induced by $V_1$ is complete, see \cite{chordalsplit}. 
Let $\SP$ denote the class of split
graphs; this is evidently a hereditary class.

\begin{theorem}\label{Tsplit}
  A graph limit is a split graph limit if and only if it can be represented
  by a graphon $W$ such that, for some $a\in\oi$,
$W=0$ on $[0,a]\times[0,a]$ and 
$W=1$ on $(a,1]\times(a,1]$.
\end{theorem}

\begin{proof}
  If $W$ is a graphon of this type, then $G(n,W)$ is \as{} a split graph.
  (Take $V_0=\set{i:X_i\in[0,a]}$ in the construction in \refS{Sgraphons}.)
Thus the graph limit $\ggw$ generated by $W$ belongs to $\hSP$ by \refT{Tgngg}.

Conversely, let $G_n$ be split graphs with $|G_n|\to\infty$ and $G_n\to W$
for some graphon $W$. Let $V(G_n)$
have the partition $V(G_n)_0\cup V(G_n)_1$ as above, and let
$a_n\=|V(G_n)_0|/|V(G_n)|$.  
Order the vertices of $G_n$ with $V(G_n)_0$ first, and
let $W_{G_n}(x,y)\=A_{G_n}(\ceil{nx},\ceil{ny})$ be the corresponding
graphon, where $A_{G_n}$ is the adjacency matrix of $G_n$ (see \eg{}
\cite{BCLSVi} or \cite{SJ249} for this standard construction of a graphon
corresponding to a graph).
Then $\dcut(W_{G_n},W)\to0$ as \ntoo, where $\dcut$ is the cut metric, see
\eg{} \cite{BCLSVi} or \cite{SJ249}. Furthermore, 
$\iint_{[0,a_n]^2} W_{G_n}=0$ and 
$\iint_{(a_n,1]^2}(1-W_{G_n})\to0$, and it follows from the definition of
the cut metric that there exist sets $A_n\subseteq\oi$ with $\gl(A_n)=a_n$
such that $\iint_{A_n^2} W\to0$ and  $\iint_{(\oi\setminus A_n)^2}
(1-W)\to0$.

Consider a subsequence such that the indicator functions $\etta_{A_n}$
converge in the weak${}^*$ topology in $L^\infty(\oi)$ (as the dual of
$L^1(\oi)$ to some function $g\in L^\infty(\oi)$; this is possible by the
compactness of the unit ball in the weak${}^*$ topology.
It is easily seen that
then
$\iint g(x)g(y)W(x,y)=0$ and $\iint(1-g(x))(1-g(y))(1-W(x,y))=0$. Hence, if
$A\=\set{x:g(x)>0}$, then $W=0$ \aex{} on $A\times A$ and $W=1$ \aex{} on
  $(\oi\setminus A)^2$, and $W$ is equivalent to a graphon of the desired
type by a measure-preserving rearrangement of \oi.
\end{proof}

We can also describe the limit of a uniformly random split graph.
 Recall the graphon $\wch$ in \refE{Echordal2}.

\begin{theorem}\label{Tsplitrandom}
Let $G_n$ be a random (labelled or unlabelled) split graph of order $n$.
Then, $G_n\pto\wch$ as \ntoo.
\end{theorem}

\begin{proof}
This follows by \refT{Tsplit} and \cite[Theorem 1.6]{HJSz}, since 
among the graphons in \refT{Tsplit}, there is (up to \aex{} equivalence) 
a unique graphon that maximizes the entropy defined in \cite{HJSz}, 
\viz{}  $\wch$ (regarded as a graphon on \oi).

It is also possible to give a direct proof; we give a sketch. 
Consider first the labelled case.

The partition $V_0\cup V_1$ of a split graph is not always unique, but
different such partitions can differ in at most two points. It is also
easily seen that most labelled split graphs have a unique partition of this
type, in the sense that the fraction of them among all labelled split graphs
of order $n$ tends to 1 as \ntoo, and that $|V_0|$ is concentrated around
$n/2$. Hence, we can construct a random graph $G_n'$, with distribution
approximating $G_n$ in the sense that the total variation distance
$\dtv(G_n,G_n')\to0$, by first selecting $V_0$ with a suitable distribution
among all subsets of size in $(n/2-n^{3/4},n/2+n^{3/4})$, say, and then
choosing the edges between $V_0$ and $V_1\=[n]\setminus V_0$ at random,
independently and with probability $1/2$ each. It follows that $G_n'\pto
\wch$, and thus $G_n\pto\wch$ as \ntoo; we omit the details.

The unlabelled case follows from the labelled, since most split graphs have
a trivial automorphism group; again we omit the details.  
\end{proof}

It is easily seen that every split graph is chordal; \cf{} \refE{Echordal2}.
It is easy to produce examples of chordal graphs that do not split;
for example, any disjoint clique graph. \refT{TDC2} yields plenty of chordal
graph limits that are not split graph limits, since they are not represented
by any graphon as in \refT{Tsplit}.
Nevertheless, \citet{chordalsplit} have shown that most (labelled) chordal
graphs 
split, and thus \refT{Tsplitrandom} yields the following.

\begin{theorem}\label{Tchordalrandom}
Let $G_n$ be a random (labelled or unlabelled) chordal graph of order $n$.
Then, $G_n\pto\wch$ as \ntoo.
\end{theorem}

\begin{proof}
  The labelled case follows immediately from \refT{Tsplitrandom} and 
\cite{chordalsplit}. The unlabelled case follow from this because, as said
above,
most split graphs have
a trivial automorphism group.
\end{proof}

\section{Claw-free and coclaw-free graphs}\label{Sclaw}

The \emph{claw} is the star $K_{1,3}$ with 4 vertices, and the \emph{coclaw}
is its complememt $\overline{K_{1,3}}$, \ie, the disjoint union of a
triangle $K_3$ and an isolated vertex.

The \emph{claw-free} graphs are the graphs without an induced claw, \ie,
$\cU_{\set{K_{1,3}}}$. 
(See \eg{} \cite{Brand} for this class of graphs.)
By \refT{Tquf}, a graph limit $\gG$ is a claw-free
graph limit (\ie, belongs to the closure $\overline{\cU}_{\set{K_{1,3}}}$)
if and only if $t(K_{1,3},\gG)=0$.
Similarly, a graph limit $\gG$ is coclaw-free 
if and only if $t(\overline{K_{1,3}},\gG)=0$.
Note that a graphon $W$ is coclaw-free if and only if
$1-W$ is claw-free.

We do not know any simple characterization of claw-free graphons.
(And thus not of coclaw-free graphons.)
However, it is easy to characterize graph limits that are 
{both} claw-free and
coclaw-free; they turn out to be trivial. 

\begin{theorem}\label{Tclaw}
A graphon that is both claw-free and coclaw-free has to be either 
$0$ or $1$ a.e. 
Thus, the only  graph limits that are both claw-free and coclaw-free are
$\ggo$ and $\ggi$.
\end{theorem}

\begin{proof}
  Let $W:\oi^2\to\oi$ be a claw-free graphon. By \eqref{tindfw}, 
  \begin{multline}\label{www}
W(x_1,x_2)W(x_1,x_3)W(x_1,x_4)
\\
\times\bigpar{1-W(x_2,x_3)}	\bigpar{1-W(x_2,x_4)}	
\bigpar{1-W(x_3,x_4)}	=0
  \end{multline}
for \aex{} $x_1,x_2,x_3,x_4\in\oi$.
By \refL{LD} we may 
(by modifying $W$ on a null set)
assume that there exists a symmetric set
$E\in\oi^2$ with Lebesgue measure 1, and containing the diagonal
$\set{(x,x)}$, such that \eqref{www} holds for all $x_1,x_2,x_3,x_4$ such
that all pairs $(x_i,x_j)\in E$.

Taking $x_1=x_2=x_3=x_4=x$, we see that $W(x,x)=0$ or 1 for every $x$.
Let $A_0\=\set{x:W(x,x)=0}$ and $A_1\=\set{x:W(x,x)=1}$.
Moreover, if $x\in A_0$, we see  by taking $x_1=y$ and $x_2=x_3=x_4=x$
that $W(x,y)=0$ when $(x,y)\in E$. Thus $W(x,y)=0$ for \aex{} $(x,y)\in
A_0\times\oi$. 

If furthermore $W$ also is coclaw-free, the same argument applies to $1-W$
(using the same modification of $W$).
Thus also $W(x,y)=1$ for \aex{} $(x,y)\in
A_1\times\oi$. 

It follows that $0=W=1$ \aex{} on $A_0\times A_1$, and thus
$A_0\times A_1$ is a null set, so either $\gl(A_0)=1$ and
$W=0$ \aex, or $\gl(A_1)=1$ and $W=1$ a.e.
\end{proof}

This result for graph limits translates to a
corresponding result for graphs. 
In fact, \refT{Tclaw} implies that a graph that is both claw-free and
coclaw-free has to be 
either almost empty or almost complete. More precisely, for any sequence
$G_n$ of such graphs, with $|G_n|=n$ for simplicity, the number of edges
$e(G_n)$ satisfies 
$e(G_n)=o(n^2)$ or $\binom n2-e(G_n)=o(n^2)$, see \refR{Rug}.
We give a simple direct
proof of this, with an explicit (and much sharper) error bound; we find it
interesting to compare the two very different proofs of 
the same result.

  Let $\cQ$ be the class of graphs whose components are 
cycles of length $\ge4$ and
paths. 
$\cQ$ can also be described as the class of graphs $G$ with maximum degree
$\gD(G)\le 2$ and no component $K_3$; note that $\cQ$ is a hereditary class.
Furthermore, if $G\in \cQ$ and $|G|=n$, then $2e(G)\le n\gD(G)\le 2n$, and thus
$e(G)\le n$. 

Note that the estimate $e(G)\le|G|$ just given for graphs $G\in\cQ$ implies
that any sequence of graphs $G_n\in\cQ$ with $|G_n|\to\infty$ converges to
$\ggo$; thus the only $\cQ$ graph limit is $\ggo$, \ie,
$\xoo{\cQ}=\set{\ggo}$.  

Let $R(k,l)$ denote the Ramsey numbers, see \cite{Ramsey}.
\begin{theorem}\label{Tclaw2}
If $G$ is a graph with $|G|=n\ge R(8,8)$, then $G$ is claw-free and
coclaw-free if and only if $G\in\cQ$ or $\overline G\in\cQ$.
In particular, then either $e(G)\le n$ or $e(G)\ge\binom n2-n$.
\end{theorem}

\begin{remark}
  It is known that $R(8,8)\le\binom{14}7=3432$, see \cite[Section
	4.3]{Ramsey}. This lower bound for the validity of the conclusion is
  presumably too high, but note that the result is not true for $3\le n\le 6$;
  counterexamples  are provided by a triangle with 0--3 additional
  vertices connected to one vertex each in the triangle.
\end{remark}

\begin{remark}
  Note that \refT{Tclaw} is an immediate consequence of \refT{Tclaw2}:
By \refT{Tquf} (or \refT{Tcap}) 
a graph limit $\gG$ that is claw-free and
coclaw-free  is a limit of a sequence of graphs that are
claw-free and coclaw-free, and by \refT{Tclaw2}, this implies that 
$\gG$ is the limit of a sequence $G_n$ of graphs such that either 
$G_n\in\cQ$ or $\overline{G_n}\in\cQ$. Selecting a subsequence we thus have
either $G_n\to\ggo$ or
$G_n\to\ggi$. 
\end{remark}

\begin{proof}
It is clear that if $G\in\cQ$, then $G$ is claw-free and coclaw-free, and
thus the same holds if $\overline G\in\cQ$.

For the converse we note that  since $n\ge R(8,8)$, either $G$ or its
complement $\overline G$ contains an empty induced  subgraph $E_8$. 
We assume that $E_8$ is an induced subgraph of $G$ and show that if $G$
further is claw-free and coclaw-free, then $G\in\cQ$.
This follows from the two claims below.

\smallskip\emph{Claim 1.}
$G$ contains no induced $K_3$.

In fact, if $G$ contains a $K_3$ and an $E_8$, then they may have at most
one common vertex and thus there exist two vertex disjoint subgraphs $A\cong
K_3$ and $B\cong E_7$ in $G$.
Every vertex in $B$ has to send at least one edge to $A$; otherwise it would
form a $\overline{K_{1,3}}$ with $A$. Hence there are at least 7 edges
between $A$ and $B$.
On the other hand, if some vertex in $A$ is connected to 3 (or more)
vertices in $B$, it forms a $K_{1,3}$ together with them; hence each vertex
in $A$ is connected to at most two vertices in $B$ and the number of edges
betwenn $A$ and $B$ is at most 6.
This contradiction proves the claim.

\smallskip\emph{Claim 2.}
The maximum degree $\gD(G)\le2$.

In fact, for any vertex $v\in G$, Claim 1 shows
that the neighbours of $v$ form an independent set.
If $G$ has a vertex $v$ with degree 3 or more, then $v$ and any three of its
neighbours thus form a $K_{1,3}$.
\end{proof}

A graph is both claw-free and coclaw-free if and only if its complement is.
Hence \refT{Tclaw} and symmetry implies the following, giving another 
example where a uniformly random graph in a hereditary class has a limit in
distribution that is random and not a single, deterministic graph limit.

\begin{theorem}
  Let $G_n$ be a uniformly random (labelled or unlabelled) claw-free and
  coclaw-free graph. Then $G_n\dto\gG$ \asntoo, where $\gG$ is the random
  graph limit that equals $\ggo$ and $\ggi$ with probability $1/2$ each.
\nopf
\end{theorem}

\newcommand\AAP{\emph{Adv. Appl. Probab.} }
\newcommand\JAP{\emph{J. Appl. Probab.} }
\newcommand\JAMS{\emph{J. \AMS} }
\newcommand\MAMS{\emph{Memoirs \AMS} }
\newcommand\PAMS{\emph{Proc. \AMS} }
\newcommand\TAMS{\emph{Trans. \AMS} }
\newcommand\AnnMS{\emph{Ann. Math. Statist.} }
\newcommand\AnnPr{\emph{Ann. Probab.} }
\newcommand\CPC{\emph{Combin. Probab. Comput.} }
\newcommand\JMAA{\emph{J. Math. Anal. Appl.} }
\newcommand\RSA{\emph{Random Struct. Alg.} }
\newcommand\ZW{\emph{Z. Wahrsch. Verw. Gebiete} }
\newcommand\DMTCS{\jour{Discr. Math. Theor. Comput. Sci.} }

\newcommand\AMS{Amer. Math. Soc.}
\newcommand\Springer{Springer-Verlag}
\newcommand\Wiley{Wiley}

\newcommand\vol{\textbf}
\newcommand\jour{\emph}
\newcommand\book{\emph}
\newcommand\inbook{\emph}
\def\no#1#2,{\unskip#2, no. #1,} 
\newcommand\toappear{\unskip, to appear}

\newcommand\webcite[1]{\url{#1}}
\newcommand\webcitesvante{\webcite{http://www.math.uu.se/~svante/papers/}}
\newcommand\arxiv[1]{\webcite{http://arxiv.org/#1}}

\def\nobibitem#1\par{}

\end{document}